\begin{document}
\newtheorem*{thm*}{Theorem}
\newtheorem{theorem}{Theorem}[section]
\newtheorem{corollary}[theorem]{Corollary}
\newtheorem{lemma}[theorem]{Lemma}
\newtheorem{fact}[theorem]{Fact}
\newtheorem*{fact*}{Fact}
\newtheorem{proposition}[theorem]{Proposition}
\newtheorem{claim}[theorem]{Claim}

\newcounter{theoremalph}
\renewcommand{\thetheoremalph}{\Alph{theoremalph}}
\newtheorem{thmAlph}[theoremalph]{Theorem}
\theoremstyle{definition}
\newtheorem{question}[theorem]{Question}
\newtheorem{definition}[theorem]{Definition}

\theoremstyle{remark}
\newtheorem{remark}[theorem]{Remark}
\newtheorem{observation}[theorem]{Observation}
\newtheorem{example}[theorem]{Example}


	\renewcommand{\ll}{\left\langle}
	\newcommand{\rr}{\right\rangle}
	\newcommand{\ls}{\left\{}
	\newcommand{\rs}{\right\}}
	\newcommand{\sm}{\setminus}
	
	\newcommand{\mcA}{\ensuremath{\mathcal{A}}}
	\newcommand{\mcB}{\ensuremath{\mathcal{B}}}
	\newcommand{\mcC}{\ensuremath{\mathcal{C}}}
	\newcommand{\mcE}{\ensuremath{\mathcal{E}}}
	\newcommand{\mcF}{\ensuremath{\mathcal{F}}}
	\newcommand{\mcG}{\ensuremath{\mathcal{G}}}
	\newcommand{\mcH}{\ensuremath{\mathcal{H}}}
	\newcommand{\mcI}{\ensuremath{\mathcal{I}}}
	\newcommand{\mcJ}{\ensuremath{\mathcal{J}}}
	\newcommand{\mcN}{\ensuremath{\mathcal{N}}}
	\newcommand{\mcO}{\ensuremath{\mathcal{O}}}
	\newcommand{\mcP}{\ensuremath{\mathcal{P}}}
	\newcommand{\mcQ}{\ensuremath{\mathcal{Q}}}
 	\newcommand{\mcR}{\ensuremath{\mathcal{R}}}
	\newcommand{\mcS}{\ensuremath{\mathcal{S}}}
	\newcommand{\mcU}{\ensuremath{\mathcal{U}}}
	\newcommand{\mcV}{\ensuremath{\mathcal{V}}}
	\newcommand{\mcW}{\ensuremath{\mathcal{W}}}
	\newcommand{\mcZ}{\ensuremath{\mathcal{Z}}}
	\newcommand{\mbQ}{\ensuremath{\mathbb{Q}}}
	\newcommand{\mbZ}{\ensuremath{\mathbb{Z}}}
    \newcommand{\mbN}{\ensuremath{\mathbb{N}}}

	\newcommand{\on}[1]{\operatorname{#1}}
	\newcommand{\set}[1]{\ensuremath{ \left\lbrace #1 \right\rbrace}}	
	\newcommand{\grep}[2]{\ensuremath{\left\langle #1 \, \middle| \, #2\right\rangle}}
	\newcommand{\real}[1]{\ensuremath{\left\lVert #1\right\rVert}}
	\newcommand{\overbar}[1]{\mkern 2mu\overline{\mkern-2mu#1\mkern-2mu}\mkern 2mu}
	
	\newcommand{\Aut}{\ensuremath{\operatorname{Aut}}}
	\newcommand{\AutO}{\ensuremath{\operatorname{Aut}^0}}
	\newcommand{\Out}{\ensuremath{\operatorname{Out}}}
	\newcommand{\Outo}[1][A_G]{\ensuremath{\operatorname{Out}^0(#1)}}
	\newcommand{\Stab}{\operatorname{Stab}}
	\newcommand{\Sym}{\operatorname{Sym}}
	\newcommand{\St}{\operatorname{St}}
	\newcommand{\Stsymp}{\operatorname{St}^\omega}
	
	\newcommand{\op}{\operatorname{op}}
	\newcommand{\st}{\operatorname{star}}
	\newcommand{\lk}{\operatorname{lk}}
	\newcommand{\im}{\operatorname{im}}
	\newcommand{\rk}{\operatorname{rk}}
	\newcommand{\corank}{\operatorname{crk}}
	\newcommand{\spn}{\operatorname{span}}
	\newcommand{\Ind}{\operatorname{Ind}}

	\newcommand{\GL}[2]{\ensuremath{\operatorname{GL}_{#1}(#2)}}
	\newcommand{\SL}[2]{\ensuremath{\operatorname{SL}_{#1}(#2)}}
	\newcommand{\Sp}[2]{\ensuremath{\operatorname{Sp}_{#1}(#2)}}
	
	\newcommand{\class}{\operatorname{cl}}
	\newcommand{\Hom}{\operatorname{Hom}}
	\newcommand{\buildingssymp}{\operatorname{T}^{\omega}}
    \newcommand{\buildingssln}{\operatorname{T}}
    \newcommand{\building}{\Delta}
	\newcommand{\Rn}{R^{2n}}
    \newcommand{\Kn}{K^{2n}}

    \newcommand{\hcl}{\operatorname{cl}(R)}
    \newcommand{\dual}[1]{#1^{'}}
    \newcommand{\signp}[1][n]{S^B_{#1}} 
    \newcommand{\sat}[1]{\operatorname{Sat}(#1)}
    \newcommand{\len}{\operatorname{len}}
    \newcommand{\classconst}{\kappa}
    \newcommand{\apartment}{\Sigma}

    \newcommand{\symf}[2]{\ensuremath{\operatorname{\omega}(#1,#2)}}
    
    \newcommand{\chevalley}{\ensuremath{\mathcal{G}}}
    \newcommand{\productSL}{\ensuremath{\mathcal{S}}}

\author{Benjamin Br\"uck}
\address{Department for Mathematical Logic and Foundational Research, M\"unster University, Germany}
\email{benjamin.brueck@uni-muenster.de}

\author{Zachary Himes}
\address{Department of Mathematics, University of Michigan, 530 Church St, Ann Arbor, MI, 48109, USA}
\email{himesz@umich.edu}

\subjclass{11F75, 20E42, 55U10}

\title[Top-degree cohomology in the symplectic group]{Top-degree rational cohomology in the symplectic group of a number ring}

\begin{abstract}
Let $K$ be a number field with ring of integers $R = \mathcal{O}_K$.
We show that if $R$ is not a principal ideal domain, then the symplectic group $\Sp{2n}{R}$ has non-trivial rational cohomology in its virtual cohomological dimension.
This demonstrates a sharp contrast to the situation where $R$ is Euclidean.
To prove our result, we study the symplectic Steinberg module, i.e.~the top-dimensional homology group of the spherical building associated to $\Sp{2n}{K}$. We show that this module is not generated by integral apartment classes.
Both of these results following from a vanishing theorem for homology with Steinberg coefficients.
\end{abstract}

\maketitle

\section{Introduction}
Let  $R$ be a Dedekind domain with field of fractions $K$. A \emph{symplectic form} on $R^{2n}$ is a non-degenerate bilinear form $\omega\colon R^{2n}\times R^{2n}\to R$ such that $\symf{x}{x}=0$ for all $x\in R^{2n}$.
The \emph{symplectic group} $\Sp{2n}{R}$ is the subgroup of all elements of $\GL{2n}{R}$ that preserve $\omega$.
The form on $R^{2n}$ naturally induces a symplectic form on $K^{2n}$, which we also denote by $\omega$. This gives an inclusion $\Sp{2n}{R}\subseteq \Sp{2n}{K}$.

Suppose now that $R = \mcO_K$ is the ring of integers in a number field $K$. Then $\Sp{2n}{R}$ is an arithmetic group and the rational cohomology of the symplectic group $H^{*}(\Sp{2n}{R}; \mathbb{Q})$ is finitely generated in all degrees \cite[Corollary 3]{MR0230332}.
Borel--Serre \cite{BS:Cornersarithmeticgroups} proved that there is
a non-negative integer $\nu_{n}\in \mathbb{Z}$, the \emph{virtual cohomological dimension} of $\Sp{2n}{R}$, such that for every $\Sp{2n}{R}$-module $M$ and for all $i> \nu_{n}$, we have 
$H^{i}(\Sp{2n}{R}; M)=0$.\footnote{A formula for $\nu_{n}$ in terms of $n$ and $R$ can be deduced from \cite[Corollary 11.4.3]{BS:Cornersarithmeticgroups}. For example, when $R=\mbZ$, we have $\nu_{n}=n^{2}$.}
In this paper, we study $H^{\nu_{n}}(\Sp{2n}{R}; \mathbb{Q})$.

If $R$ is Euclidean, it is known that these groups vanish:
Br\"uck--Patzt--Sroka showed that $H^{\nu_{n}}(\Sp{2n}{\mathbb{Z}}; \mathbb{Q})=0$ for all $n\geq 1$ \cite[Theorem 44]{Sroka2021}. This extended low-degree calculations by Igusa \cite{Igusa1962}, Lee-Weintraub \cite{Lee1985}, Hain \cite{Hain2002} and Hulek--Tommasi \cite{Hulek2012}. 
Br\"uck--Santos Rego--Sroka generalized this by showing that for all Euclidean number rings $R$,  $H^{\nu_{n}}(\Sp{2n}{R}; \mathbb{Q})=0$ for all $n\geq 1$ \cite[Theorem 1.1]{Brueck2022c}.

The following theorem shows that the situation is quite different outside the Euclidean setting. Let $\hcl$ denote the ideal class group of $R$. Recall that $R$ is a principal ideal domain (PID) if and only if $|\hcl|=1$ and that every Euclidean domain is a PID.

\begin{theorem}
\label{thm_non_trivial_cohomology}
Let $K$ be a number field and $R= \mcO_K$ its ring of integers. Then for all $n\geq 1$,
\begin{equation*}
\dim (H^{\nu_{n}}(\Sp{2n}{R}; \mbQ)) \geq (|\class(R)|-1)^n.
\end{equation*}
\end{theorem}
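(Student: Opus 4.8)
The plan is to follow the now-standard "Steinberg module" strategy for computing top-degree cohomology of arithmetic groups, but in the symplectic setting. By Borel–Serre duality, $H^{\nu_n}(\Sp{2n}{R};\mbQ)$ is isomorphic to the coinvariants $H_0(\Sp{2n}{R}; \St^\omega_n \otimes \mbQ)$, where $\St^\omega_n$ is the symplectic Steinberg module, i.e. the top reduced homology of the Tits building $\buildingssymp$ of $\Sp{2n}{K}$. So the task reduces to showing that these coinvariants have dimension at least $(|\class(R)|-1)^n$. The key point — already advertised in the abstract — is that $\St^\omega_n$ is *not* generated by integral apartment classes when $R$ is not a PID; more precisely, I would aim to exhibit an explicit quotient of $\St^\omega_n \otimes \mbQ$ of dimension $(|\class(R)|-1)^n$ on which $\Sp{2n}{R}$ acts trivially.

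**Key steps, in order.** First, set up the Borel–Serre duality isomorphism $H^{\nu_n}(\Sp{2n}{R};\mbQ) \cong H_0(\Sp{2n}{R}; \St^\omega_n\otimes\mbQ)$, citing the finiteness results and the fact that $\Sp{2n}{R}$ is a virtual duality group with dualizing module $\St^\omega_n$. Second, I would recall a combinatorial model for the symplectic building: its simplices are chains of isotropic subspaces of $K^{2n}$, and apartments correspond to "symplectic frames" (symplectic bases up to scaling of the hyperbolic pairs). Third — the heart of the matter — I would construct a $\Sp{2n}{R}$-equivariant surjection from $\St^\omega_n\otimes\mbQ$ onto a module built out of ideal classes. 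The natural candidate is something like $(\widetilde{\mbQ[\hcl]})^{\otimes n}$, where $\widetilde{\mbQ[\hcl]}$ is the augmentation-type summand of dimension $|\class(R)|-1$, with $\Sp{2n}{R}$ acting trivially because the group action permutes lattices but fixes the relevant ideal-class data. Concretely I expect one picks a Lagrangian decomposition $K^{2n} = L \oplus L'$, restricts apartment classes to those adapted to a fixed $R$-lattice, and reads off the Steinitz classes of the $n$ lines involved; the $(-1)$ and the $n$-th power both come from an augmentation/reduced-homology phenomenon in each hyperbolic plane, exactly paralleling the $\SL$ case (Church–Farb–Putman, Miller–Nagpal–Patzt–Sroka) where one gets $(|\class(R)|-1)^{?}$-type lower bounds. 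Fourth, verify the map is well-defined on all relators of the Steinberg module — i.e. that it kills the boundaries coming from non-maximal isotropic flags — and that it is surjective by exhibiting enough apartment classes hitting a spanning set. Finally, conclude $\dim H_0(\Sp{2n}{R};\St^\omega_n\otimes\mbQ) \ge (|\class(R)|-1)^n$.

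**The main obstacle** I anticipate is step three: defining the invariant-valued map on $\St^\omega_n\otimes\mbQ$ and checking it descends, i.e. that it is genuinely well-defined on the Steinberg module rather than just on the free module on apartment classes. In the $\SL$ setting this is handled via Bykovskii-type presentations or via the Church–Farb–Putman / Miller–Patzt–Putman machinery relating $\St$ to partial bases; the symplectic analogue requires an appropriate presentation of $\St^\omega_n$ (or at least a generating set of relators) in terms of isotropic flags, and one must check the Steinitz-class invariant is constant on each relator. A secondary subtlety is pinning down the exact exponent and the role of the "$-1$": one must be careful that the construction produces a $\mbQ$-vector space of dimension exactly $(|\class(R)|-1)^n$ and not merely $|\class(R)|^n$ or something smaller, which I expect hinges on a reduced-homology computation for the building of $\Sp{2}{} = \SL{2}{}$ over a Dedekind domain (the rank-one case), bootstrapped to rank $n$ via a join/product decomposition of apartments into $n$ hyperbolic planes. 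I would isolate the rank-one statement as a lemma and then assemble the general case by an inductive or tensor-product argument over the $n$ coordinate symplectic planes.
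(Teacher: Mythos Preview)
Your overall strategy matches the paper's: Borel--Serre duality reduces the question to the coinvariants of $\Stsymp$, and one constructs an $\Sp{2n}{R}$-invariant surjection onto a space of dimension $(|\class(R)|-1)^n$. The target you propose, $(\widetilde{\mbQ[\hcl]})^{\otimes n}$, is indeed (isomorphic to) what the paper uses, namely $\tilde{H}_{n-1}(X_n(\class(R)))\otimes\mbQ$, where $X_n(\class(R))$ is the $n$-fold join of the discrete set $\class(R)$.

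However, you have the difficulty backwards. Your ``main obstacle'' --- checking that a map defined on apartment classes descends to the Steinberg module --- is entirely avoidable, and the paper does not proceed this way. Rather than defining the map on apartments and checking relators, the paper defines a \emph{poset map} $\psi\colon \buildingssymp(R^{2n}) \to X_n(\class(R))$ sending an isotropic summand $U$ to the pair $(\rk(U),[U])$. This is manifestly $\Sp{2n}{R}$-invariant and, being a map of simplicial complexes, induces a well-defined map on homology automatically; no presentation of $\Stsymp$ is needed. Your suggestion to fix a Lagrangian decomposition and ``read off the Steinitz classes of the $n$ lines'' is both different and harder to make rigorous, since it is a priori defined only on apartments adapted to that decomposition, not on arbitrary isotropic flags.

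Conversely, the step you treat as routine --- surjectivity of $\psi_*$ --- is the technical heart of the argument (the paper's entire Section~3). Given a target sphere class $[\apartment_\mathbf{S}]$ indexed by pairs $\{a_i,b_i\}\subset\class(R)$, one must build a symplectic frame $\mathbf{I}$ of $R^{2n}$ whose apartment class maps onto it. This is delicate: an apartment in $\buildingssymp$ has $2^n n!$ chambers (the type-$\mathtt{C}_n$ Coxeter complex), while the target sphere has only $2^n$ top cells. The paper handles this via an explicit construction of $\mathbf{I}$ with prescribed class-group data on many subsummands, together with a ``good/bad'' dichotomy on $\signp$: the $2^n$ good chambers hit $[\apartment_\mathbf{S}]$ exactly, while bad chambers are paired off by an involution and cancel. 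Your proposed bootstrap from the rank-one case via a tensor/join decomposition does not directly produce this, because an apartment in $\buildingssymp(R^{2n})$ is not a join of $n$ rank-one apartments, and the frame conditions (isotropy of $I_i+I_j$ for $j\ne -i$) couple the hyperbolic planes nontrivially.
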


This result is motivated by a related lower bound for the rational cohomology of $\SL{n}{R}$ in its virtual cohomological dimension that was obtained by Church--Farb--Putman. In \cite[Theorem D]{Church2019}, they proved that when $R$ is a number ring, the dimension of $H^{\on{vcd}(\SL{n}{R})}(\SL{n}{R}; \mathbb{Q})$ is at least $(\vert\hcl\vert -1)^{n-1}$.
The general strategy we use to prove \cref{thm_non_trivial_cohomology} is closely related to theirs. In what follows, we give an outline of it in our setting.

\subsection{Proof sketch for \cref{thm_non_trivial_cohomology}}
A duality result by Borel--Serre (see \cref{thm_Borel_Serre}) implies that there is an isomorphism
\begin{equation}
\label{eq_BS_special_case}
	H^{\nu_{n}}( \Sp{2n}{R};\mbQ) \cong H_0(\Sp{2n}{R}; \Stsymp \otimes_{\mbZ} \mbQ),
\end{equation}
where $\Stsymp = \Stsymp_n(K)$ is the $(n-1)$-st reduced homology of $\buildingssymp(K^{2n})$, the poset of non-trivial isotropic subspaces\footnote{That is, subspaces $V\subseteq K^{2n}$ on which $\omega$ is trivial: $\omega(x,y)=0$ for all $x,y\in V$.} of $K^{2n}$.
The order complex of $\buildingssymp(K^{2n})$ is the \emph{spherical building of type $\mathtt{C}_n $ over $K$}. As elements of $\Sp{2n}{R}$ send isotropic subspaces of $K^{2n}$ to isotropic subspaces, the building $\buildingssymp(K^{2n})$ and hence its homology $\Stsymp$ come with an action of $\Sp{2n}{R}$. We call $\Stsymp$ the \emph{symplectic Steinberg module}.
Using \cref{eq_BS_special_case}, \cref{thm_non_trivial_cohomology} quickly follows from the following, which is the main result of the present article.

\begin{theorem}
\label{thm_steinberg_coinvariants}
Let $R$ be a Dedekind domain. Then for all $n\geq 1$,
\begin{equation*}
	\rk_{\mbZ} H_0(\Sp{2n}{R}; \Stsymp) \geq (|\class(R)|-1)^n.
\end{equation*}
\end{theorem}

$H_0(\Sp{2n}{R}; \Stsymp)$ is isomorphic to the $\Sp{2n}{R}$-coinvariants of $\Stsymp$.
We prove \cref{thm_steinberg_coinvariants} by obtaining a lower bound on the rank of these coinvariants.
In order to do so, we use a finite simplicial complex $X_{n}(\hcl)$
whose geometric realization is homotopy equivalent to a wedge of $(\vert\hcl\vert-1)^{n}$ many $(n-1)$-spheres (see \cref{sec_defn_X_n}). There is an $\Sp{2n}{R}$-invariant map
\begin{equation*}
	\psi\colon \buildingssymp(\Kn) \to X_{n}(\class(R))
\end{equation*}
that induces a map
\begin{equation*}
	\psi_{*}\colon \Stsymp_{\Sp{2n}{R}} \to \tilde{H}_{n-1}(X_{n}(\hcl))\cong \mbZ^{(|\class(R)|-1)^n}.
\end{equation*}
Proving that this induced map is onto gives the desired lower bound for the rank of $H_0(\Sp{2n}{R}; \Stsymp)$ and is the main difficulty in the proof.

For showing the surjectivity of $\psi_{*}$, we use explicit generating sets for $\Stsymp$ and $\tilde{H}_{n-1}(X_{n}(\hcl))$: 
We call a collection $\mathbf{L} = \ls L_1, \ldots, L_n, L_{-n}, \ldots, L_{-1}\rs$ of dimension-1 subspaces a \emph{symplectic frame of $K^{2n}$} if $L_i + L_j$ is isotropic if and only if $j\not=-i$. 
To each symplectic frame $\mathbf{L}$, one can associate a subcomplex of $\buildingssymp(K^{2n})$ called the \emph{apartment} $\apartment_{\mathbf{L}}$ corresponding to $\mathbf{L}$ (see \cref{sec_buildings}). This apartment is homeomorphic to an $(n-1)$-sphere. Its fundamental class determines an \emph{apartment class}
\begin{equation*}
	[\apartment_{\mathbf{L}}]\in \tilde{H}_{n-1}(\buildingssymp(K^{2n})) = \Stsymp.
\end{equation*}
The Solomon--Tits Theorem \cite{Sol:Steinbergcharacterfinite} states that the set of apartment classes generates $\Stsymp$.
Similarly, $\tilde{H}_{n-1}(X_{n}(\hcl))$ is generated by certain explicitly described classes $[\apartment_\mathbf{S}]$.
We show that $\psi_{*}$ is onto by constructing for each such generator $[\apartment_\mathbf{S}]$ an apartment class $[\apartment_\mathbf{L}]$ mapping to $[\apartment_\mathbf{S}]$ (for the idea behind this, see \cref{sec_psi_induces_surjectivity}).

\subsection{Non-integrality}
We call a symplectic frame $\mathbf{L} = \ls L_1, \ldots, L_n, L_{-n}, \ldots, L_{-1}\rs$ of $K^{2n}$ \emph{integral} if there is a symplectic basis $e_1, \ldots, e_n, e_{-n}, \ldots, e_{-1}$ of $R^{2n}$ such that for each $i$, the line $L_i$ is spanned by $e_i$.
We then call $\apartment_{\mathbf{L}}$ an integral apartment. 
These play a distinguished role because while there are in general several $\Sp{2n}{R}$-orbits of apartments, all \emph{integral} apartments lie in the same $\Sp{2n}{R}$-orbit.
In particular, if $\Stsymp$ is generated by integral apartment classes, then it is cyclic as an $\Sp{2n}{R}$-module. 
The question of whether this property holds was studied in several related cases, see e.g.~\cite{AR:modularsymbolcontinued, Brueck2023, Brueck2022c, Church2019, Gun:Symplecticmodularsymbols, MPWY:Nonintegralitysome,  Toth2005}.
In particular, T\'oth \cite{Toth2005} proved that it holds for $\Sp{2n}{R}$ if $R$ is Euclidean.
As a second consequence of \cref{thm_steinberg_coinvariants}, we show that it does not hold if $R$ is not a PID:

\begin{theorem}
\label{thm_non_integrality}
 Let $R$ be a Dedekind domain with fraction field $K$. If $2\leq \vert\hcl\vert<\infty$, then $\Stsymp= \Stsymp_n(K)$ is not generated by integral apartment classes.
\end{theorem}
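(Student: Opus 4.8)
The plan is to follow the template of Church--Farb--Putman. We use the $\Sp{2n}{R}$-invariant map $\psi\colon \buildingssymp(\Rn)\to X_{n}(\hcl)$ of \cref{sec_defn_X_n}, whose target carries the trivial $\Sp{2n}{R}$-action, and play it against the fact that all integral apartments form a single $\Sp{2n}{R}$-orbit. The one new ingredient needed is the claim $(\star)$: $\psi_{*}([\apartment_{\mathbf{L}}])=0$ in $\tilde{H}_{n-1}(X_{n}(\hcl))$ for every integral symplectic frame $\mathbf{L}$. Granting $(\star)$, suppose toward a contradiction that $\Stsymp$ is generated by integral apartment classes. As observed above, $\Stsymp$ is then cyclic as an $\Sp{2n}{R}$-module, say $\Stsymp=\mbZ[\Sp{2n}{R}]\cdot[\apartment_{\mathbf{L}_{0}}]$ for some integral frame $\mathbf{L}_{0}$. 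Applying the $\Sp{2n}{R}$-invariant homomorphism $\psi_{*}$ and using $(\star)$ for $\mathbf{L}_{0}$, we obtain $\psi_{*}(g\cdot[\apartment_{\mathbf{L}_{0}}])=\psi_{*}([\apartment_{\mathbf{L}_{0}}])=0$ for every $g\in\Sp{2n}{R}$, so $\psi_{*}$ is identically zero on $\Stsymp$. But $\psi_{*}$ is surjective onto $\tilde{H}_{n-1}(X_{n}(\hcl))\cong\mbZ^{(|\hcl|-1)^{n}}$ (this is established while proving \cref{thm_non_trivial_cohomology}, see \cref{sec_psi_induces_surjectivity}), and that group is non-zero because $2\le|\hcl|<\infty$ forces $(|\hcl|-1)^{n}\ge 1$. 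This contradiction proves the theorem; note that only the weaker statement $\psi_{*}\neq 0$ is actually used.

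It remains to establish $(\star)$. Since $\psi_{*}$ is $\Sp{2n}{R}$-invariant and the action of any $g\in\Sp{2n}{R}$ sends $[\apartment_{\mathbf{L}}]$ to $\pm[\apartment_{g\mathbf{L}}]$, the value $\psi_{*}([\apartment_{\mathbf{L}}])$ is the same for every integral frame $\mathbf{L}$; hence it suffices to compute it for the standard integral frame $\mathbf{L}_{0}$ arising from a symplectic basis $e_{1},\dots,e_{n},e_{-n},\dots,e_{-1}$ of $\Rn$, so that $L_{i}=K e_{i}$ and $L_{i}\cap\Rn=R e_{i}$. Every vertex of $\apartment_{\mathbf{L}_{0}}$ is a subspace of the form $\bigoplus_{i\in I}L_{i}$ for an index set $I$ with $I\cap(-I)=\emptyset$, and its intersection with $\Rn$ is the \emph{free} summand $\bigoplus_{i\in I}R e_{i}$ of $\Rn$. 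Consequently every ideal-class datum that $\psi$ records along $\apartment_{\mathbf{L}_{0}}$ is trivial, so $\psi$ maps $\apartment_{\mathbf{L}_{0}}$ into the subcomplex of $X_{n}(\hcl)$ spanned by the simplices whose ideal-class labels are all trivial. By the construction of $X_{n}(\hcl)$ this subcomplex is contractible, so the $(n-1)$-cycle $[\apartment_{\mathbf{L}_{0}}]$ is null-homologous there; therefore $\psi_{*}([\apartment_{\mathbf{L}_{0}}])=0$.

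I expect the only genuinely hard step to be the one shared with \cref{thm_non_trivial_cohomology}: the surjectivity of $\psi_{*}$, handled by the explicit apartment-class constructions sketched in \cref{sec_psi_induces_surjectivity}. The part specific to the present statement, $(\star)$, is comparatively soft, but carrying it out carefully still requires unwinding the definitions of $X_{n}(\hcl)$ and of $\psi$ far enough to identify the subcomplex of $X_{n}(\hcl)$ on which all ideal-class labels are trivial, verify its contractibility in the chosen model, and check that the standard integral apartment lands inside it; it also uses that all integral apartments lie in a single $\Sp{2n}{R}$-orbit, so that this one computation suffices.
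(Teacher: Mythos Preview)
Your argument rests on $(\star)$, which you reduce to the standard frame via the assertion that all integral apartments lie in one $\Sp{2n}{R}$-orbit. Both steps fail when $R$ is not a PID. An integral frame need not consist of free rank-$1$ summands: for any $c\in\hcl$ there is a decomposition $R^{2}=J\oplus J'$ with $[J]=c$ and $[J']=-c$, and this is an integral symplectic frame of $R^{2}$; if $c\neq 0$ then no $g\in\SL{2}{R}$ can carry $\{Re_{1},Re_{-1}\}$ to $\{J,J'\}$, since $g(Re_{i})$ is free while $J$ is not. Orthogonal sums give the same obstruction for all $n$, so the remark from the introduction that you invoke holds only in the PID case and is never used in the paper's actual proof. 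Worse, $(\star)$ itself is false: for an integral frame $\mathbf{I}$ with $[I_{j}]=c_{j}$, integrality gives $I_{\sigma([m])}=\bigoplus_{i\le m}I_{\sigma(i)}$ and hence $\psi(I_{\sigma([m])})=(m,\,\sum_{i\le m}c_{\sigma(i)})$; already for $n=1$ one finds $\psi_{*}([\apartment_{\mathbf{I}}])\neq 0$ whenever $2c_{1}\neq 0$, and analogous nonzero images occur for every $n$ once $\hcl$ contains an element of order greater than $2$.

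The paper avoids $(\star)$ entirely. It treats $n=1$ by reducing to \cite[Theorem B']{Church2019} via $\Sp{2}{R}=\SL{2}{R}$. For $n\ge 2$ it bounds the rank of $\psi_{*}$ on the span of integral apartment classes: the constraint $c_{-j}=-c_{j}$ from \cref{integral frame splits R^{2n}} together with a reflection-pairing cancellation shows that $\psi_{*}([\apartment_{\mathbf{I}}])$ depends, up to sign, only on the set of unordered pairs $\{\{c_{j},-c_{j}\}\}_{j}$ and vanishes unless the $c_{\pm j}$ are pairwise distinct. This caps the number of possibly nonzero images at $\binom{\lfloor|\hcl|/2\rfloor}{n}<(|\hcl|-1)^{n}$, which combined with the surjectivity of $\psi_{*}$ (\cref{prop_homology_surjectivity}) gives the conclusion.
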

A similar result was obtained by Church--Farb--Putman in the setting of $\SL{n}{R}$ in \cite[Theorem B']{Church2019}. Our proof however is different from theirs. Instead of using explicit combinatorics for symplectic groups, we obtain \cref{thm_non_integrality} directly from  \cref{thm_steinberg_coinvariants} by invoking a result of Br\"uck--Santos Rego--Sroka \cite{Brueck2022c}.

\subsection{Relation to $\on{SL}_n$ and other types of groups}
While the general strategy of proof for \cref{thm_non_trivial_cohomology} and \cref{thm_non_integrality} carries over from the $\on{SL}_n$ setting considered in \cite{Church2019} to the setting of $\on{Sp}_{2n}$ considered here, the different combinatorics of the associated spherical buildings leads to some complications. In the $\on{SL}_n$ setting, apartments are barycentric subdivisions of the boundary of an $(n-1)$-simplex, whereas in the $\on{Sp}_{2n}$ setting, they are barycentric subdivisions of the boundary of an $n$-dimensional cross polytope.
In particular, the proof that $\psi_{*}$ is surjective is slightly more intricate here.
The reason is that for constructing an apartment in the symplectic building $\buildingssymp$ (or equivalently, a symplectic frame $\mathbf{L}$), one needs to verify an isotropy condition on $L_{i}+ L_{j}$ among all pairs of elements $L_{i}, L_{j}$ in $\mathbf{L}$, see also \cref{sec_motivation}.
The different combinatorics is also shown by the fact that we need to replace the symmetric group, which describes the ``type $\mathtt{A}$'' combinatorics of apartments in the setting of $\on{SL}_n$, by the group of signed permutations, which describes the ``type $\mathtt{C}$'' combinatorics of apartments in the symplectic setting.

For comments on how this strategy could be extended to further types of groups, see \cref{sec_open_questions}.

\subsection{Outline}
\cref{sec_basics} contains background material. \cref{sec_surjectivity_symplectic} is the technical core of this article. In it, we prove that the map $\psi_*\colon \Stsymp_{\Sp{2n}{R}} \to \mbZ^{(|\class(R)|-1)^n}$ mentioned above is surjective (\cref{prop_homology_surjectivity}).
From this, we deduce our main results (\cref{thm_non_trivial_cohomology}, \cref{thm_steinberg_coinvariants} and \cref{thm_non_integrality}) in \cref{sec_proofs_main_results}.
We conclude with open questions and future directions in \cref{sec_open_questions}.

\subsection{Acknowledgments}
We would like to thank Jeremy Miller for many helpful conversations, for comments on a draft of this paper, and for suggesting a collaboration on this project in the first place.
We are grateful to Robin J.~Sroka for exchanges and his contributions in early stages of this project.
We thank Oscar Randal-Williams for many helpful conversations, for supporting a visit of the first named author in Cambridge, and for the crucial observation that the proof of \cref{thm_non_trivial_cohomology} does not need a stronger version of \cref{chain of istropics}. 
We thank Marc Burger, Matthew Cordes and Emmanuel Kowalski for interesting conversations about the topics of this article.
We thank the anonymous referees for helpful feedback that led to several improvements in the paper.

BB was partially funded by the Deutsche Forschungsgemeinschaft (DFG, German Research Foundation) under Germany's Excellence Strategy EXC 2044 – 390685587, Mathematics Münster: Dynamics–Geometry–Structure.
ZH was supported by the ERC under the European Union’s
Horizon 2020 research and innovation programme (grant agreement No. 756444).

\section{Basics}
\label{sec_basics}
Throughout this section, $R$ is a Dedekind domain and $K = \on{frac}(R)$ is its field of fractions.

\subsection{Rings, modules, class group}
\label{sec_basics_rings_modules}
We recall some basic facts about projective modules over Dedekind domains, see also \cite[in particular Section 2.2.1]{Church2019} and \cite{Mil:IntroductionalgebraicK}.

\subsubsection{Projective modules and direct summands} 
Let $M$ be a finitely generated projective $R$-module.
The \emph{rank} of $M$ is defined to be 
\begin{equation*}
	\rk(M)\colonequals\text{dim}_{K}(M\otimes_{R} K).
\end{equation*}
If $U\subseteq M$ is a submodule, it is projective again. We say that $U$ is a \emph{(direct) summand} if there exists a submodule $U'\subseteq M$ such that $M= U\oplus U'$. 

The intersection of direct summands is again a summand.
Furthermore, if $U, U'\subseteq M$ are summands and $U\subseteq U'$, then $U$ is a summand of $U'$.

\begin{lemma}
\label{iso btw summands and subspaces} 
The assignment $U\mapsto U\otimes_{R}K$ defines an isomorphism of posets
\begin{equation*}
	\{\text{summands of }M,\, \subseteq \}\longrightarrow \{\text{subspaces of } M\otimes_{R}K,\, \subseteq\},
\end{equation*}
which for each $k$ restricts to a bijection between the set of rank-$k$ summands and dimension-$k$ subspaces. The inverse is given by $V\mapsto V\cap M$ (here, $M$ is included into $M\otimes_R K$ via the identification $M\cong (M\otimes_R R)\subset M\otimes_R K$).
\end{lemma}
\begin{proof}
That this map defines a bijection is a standard fact, stated in \cite[Lemma 2.4(c)]{Church2019}. That it preserves the containment relations on summands and subspaces follows immediately from the definitions. 
\end{proof}

\subsubsection{The class group} The \emph{class group} $\hcl$ of $R$ is the set of isomorphism classes of rank-$1$ projective $R$-modules. The tensor product gives $\hcl$ the structure of an abelian group, which we write additively. For every rank-$n$  non-zero projective $R$-module $M$, there is a unique rank-$1$ projective $R$-module $I$ such that $M\cong R^{n-1}\oplus I$;
we write $[M]\coloneqq I\in\hcl$.
For two finite rank non-zero projective $R$-modules $M$ and $M'$, we have $[M\oplus M']=[M]+[M']$. Two such modules $M$ and $M'$ are isomorphic if and only if $\rk(M)=\rk(M')$ and $[M]=[M']$.

Given a non-zero subspace $V\subseteq \Kn$, we write $[V]$ to denote $ [V\cap \Rn]\in\hcl$. Given two disjoint non-zero subspaces $V, W\subseteq \Kn$, we do \emph{not} necessarily have that $[V\oplus W]=[V]+[W]$ since, in general, $(V\oplus W)\cap \Rn\neq (V\cap \Rn)\oplus (W\cap \Rn)$.
\subsection{Symplectic linear algebra}
In this article, we study modules that are equipped with a symplectic form.
\subsubsection{Basic definitions}
Given an $R$-module $M$, a bilinear form $\omega\colon M\times M\to R$  is \emph{non-degenerate} if the maps
    $$x\mapsto \symf{x}{-},\quad\quad y\mapsto \symf{-}{y} $$
from $M$ to the dual module $M^{*}=\text{Hom}_{R}(M, R)$ are bijections.   
A \emph{symplectic form} on $M$ is a non-degenerate bilinear form $\omega$ such that $\symf{x}{x}=0$ for all $x\in M$.
Such a form is skew-symmetric because $\symf{x}{y}+\symf{y}{x}=\symf{x+y}{x+y}-\symf{x}{x}-\symf{y}{y}=0$. A \emph{symplectic module (over $R$)} is a pair $(M, \omega)$, where $M$ is an $R$-module and $\omega$ is a symplectic form on $M$.
By abuse of notation, we often write $M$ for a symplectic module $(M, \omega)$. 

An example of a symplectic form $\omega_{0}$ on $R^{2n}$ comes from taking a basis $\allowbreak e_{1},\ldots, e_{n}$, $e_{-n},\ldots, e_{-1}$ of $R^{2n}$, setting $\omega_{0}(e_{i}, e_{j})=\omega_{0}(e_{j}, e_{i})=0$ for all $i$ and $j$ with $j\neq -i$ and $\omega_{0}(e_{i}, e_{-i})=-\omega_{0}(e_{-i}, e_{i})=1$ for all $i=1,\ldots, n$, and extending $\omega_{0}$ linearly.
We then call $e_{1},\ldots, e_{n}$, $e_{-n},\ldots, e_{-1}$ a \emph{symplectic basis} of $R^{2n}$ (with respect to $\omega_0$).
The following lemma allows us to assume that every symplectic module $M$ over $R$ is of the form $(R^{2n},\omega = \omega_0)$.

\begin{lemma}[{\cite[Corollary 3.5]{Milnor1973}}]
\label{symplectic modules are free}
Let $R$ be a Dedekind domain and $(M, \omega)$ a symplectic module over $R$ where $M$ is finitely generated projective. Then $M$ is a free module of even rank. In addition, any two symplectic forms on $R^{2n}$ are isomorphic.
\end{lemma}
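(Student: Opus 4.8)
The plan is to argue by induction on $\rk(M)$, splitting off one hyperbolic plane at a time, with the trivial module $M=0$ as base case. The even-rank statement can be obtained independently and cheaply: $M\otimes_R K$ is a symplectic $K$-vector space, hence of even dimension, so $\rk(M)=\dim_K(M\otimes_R K)$ is even; it will also reappear automatically in the induction.

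For the inductive step, assume $\rk(M)=2n\ge 2$. By the structure of projective modules over a Dedekind domain, $M\cong R^{2n-1}\oplus I$ for some rank-$1$ module $I$, so $M$ contains a \emph{unimodular} element $x$, i.e.\ one spanning a rank-$1$ free direct summand $Rx$. Choose $\phi\in M^*$ with $\phi(x)=1$, and use non-degeneracy of $\omega$ to write $\phi=\symf{-}{y}$ for some $y\in M$; then $\symf{x}{y}=1$ and, by skew-symmetry, $\symf{y}{x}=-1$. Put $N\coloneqq Rx+Ry$. Pairing a hypothetical relation $ax+by=0$ against $x$ and against $y$ forces $a=b=0$, so $N$ is free of rank $2$ with basis $\{x,y\}$, and $\omega|_N$ is the standard hyperbolic form.

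Next I would show $M=N\oplus N^\perp$, where $N^\perp=\{m\in M:\symf{m}{x}=\symf{m}{y}=0\}$. The $R$-linear map $\pi\colon M\to N$ defined by $\pi(m)=\symf{m}{y}\,x-\symf{m}{x}\,y$ has image in $N$, satisfies $\pi(x)=x$ and $\pi(y)=y$ (so $\pi|_N=\operatorname{id}$), and has kernel exactly $N^\perp$; hence $\pi$ is a retraction and $M=N\oplus N^\perp$. It then remains to check that $(N^\perp,\omega|_{N^\perp})$ is again a symplectic module with underlying module finitely generated projective of strictly smaller rank: it is a direct summand of $M$, hence finitely generated projective, of rank $2n-2$; the form is still alternating; and it is non-degenerate because any $\psi\in(N^\perp)^*$ extends, via the splitting $M=N\oplus N^\perp$, to $\tilde\psi\in M^*$ vanishing on $N$, which equals $\symf{-}{m}$ for some $m\in M$; vanishing of $\tilde\psi$ on $N$ forces $m\in N^\perp$, giving $\psi=\symf{-}{m}\big|_{N^\perp}$, while injectivity of $m\mapsto\symf{-}{m}\big|_{N^\perp}$ on $N^\perp$ follows from non-degeneracy of $\omega$ on $M$ together with $m\perp N$. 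By the inductive hypothesis $N^\perp\cong R^{2n-2}$, so $M\cong R^2\oplus R^{2n-2}=R^{2n}$ is free of even rank.

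The one genuinely delicate point — the "main obstacle" — is that one must start from a \emph{unimodular} $x$, not an arbitrary nonzero vector: unimodularity is exactly what produces a functional $\phi$ with $\phi(x)=1$ and, via the explicit projection $\pi$, what makes the hyperbolic plane $N$ a \emph{direct summand} of $M$ rather than merely a submodule. Once $N$ is peeled off as an orthogonal direct summand, the argument reduces to the familiar bookkeeping of the field case; the only step requiring a little care over $R$ is verifying non-degeneracy of the restricted form on $N^\perp$, where one must use the direct-sum decomposition $M=N\oplus N^\perp$ and not just the submodule $N^\perp$ in isolation.
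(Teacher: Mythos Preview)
Your argument is correct and is essentially the classical hyperbolic-splitting proof. The paper itself gives no proof of this lemma; it is stated with a bare citation to \cite[Corollary 3.5]{Milnor1973}, and Milnor's proof there proceeds exactly along the lines you describe: pick a unimodular element, use non-degeneracy to complete it to a hyperbolic pair, split off the resulting free rank-$2$ summand as an orthogonal direct summand, and induct. So there is nothing to compare beyond noting that you have reproduced the cited argument faithfully.
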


The \emph{symplectic group} $\Sp{2n}{R}$ is the subgroup of $\on{GL}(R^{2n})$ given by all $g$ such that $\omega(g(x),g(y)) = \omega(x,y)$ for all $x,y\in R^{2n}$. Note that it follows immediately from the definitions that all symplectic bases of $R^{2n}$ lie in the same $\Sp{2n}{R}$-orbit.

Let $U \subseteq R^{2n}$ be a submodule. The \emph{orthogonal complement} $U^\perp \subseteq R^{2n}$ is the submodule consisting of all $x\in R^{2n}$ such that $\symf{x}{m}=0$ for all $m \in U$.
The submodule $U$ is \emph{isotropic} if $U\subseteq U^{\perp}$.

\subsubsection{Forms on $R^{2n}$ and $K^{2n}$}
Using that $K^{2n}=R^{2n}\otimes_{R}K$, a  symplectic form $\omega$ on $R^{2n}$ induces a symplectic form on $K^{2n}$ given by
\begin{align*}
    K^{2n}\times K^{2n}&\to K\\
    \left(r\otimes_{R} \frac{p}{q}, r'\otimes_{R} \frac{p'}{q'}\right)&\mapsto \frac{p}{q}\cdot\frac{p'}{q'}\symf{r}{r'}.
\end{align*}
By abuse of notation, we denote this form by $\omega$ as well. We also do not distinguish in notation between orthogonal complements with respect to the forms on $R^{2n}$ and $K^{2n}$.
In what follows, we verify that several notions translate well between  the symplectic modules $(R^{2n},\omega)$ and $(K^{2n},\omega)$.

\begin{lemma}
\label{lem_M_perp_intersections}
If $U \subseteq R^{2n}$ is a summand, then $(U\otimes_{R} K)^\perp \cap R^{2n} = U^\perp$.
\end{lemma}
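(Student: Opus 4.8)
The plan is a direct set-theoretic verification, resting on two compatibility facts recorded earlier: first, the induced form on $K^{2n}=R^{2n}\otimes_R K$ restricts to $\omega$ on $R^{2n}\times R^{2n}$ (immediate from the formula defining it); and second, since $U$ is a summand of $R^{2n}$, it is a torsion-free $R$-module, so the canonical map $U\to U\otimes_R K$ is injective and identifies $U$ with an $R$-submodule of $U\otimes_R K$ whose $K$-span is all of $U\otimes_R K$. Throughout, all four modules in sight are regarded as sitting inside $K^{2n}$, and $U^\perp$ means the orthogonal complement computed inside $R^{2n}$.

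First I would prove $U^\perp\subseteq (U\otimes_R K)^\perp\cap R^{2n}$. Let $x\in U^\perp$; then $x\in R^{2n}$ already, so it remains to check $\symf{x}{v}=0$ for every $v\in U\otimes_R K$. Writing such a $v$ as a $K$-linear combination $v=\sum_i \lambda_i m_i$ with $\lambda_i\in K$ and $m_i\in U$, $K$-bilinearity of $\omega$ gives $\symf{x}{v}=\sum_i\lambda_i\,\symf{x}{m_i}=0$, since each $\symf{x}{m_i}=0$ by definition of $U^\perp$ (using that $\omega$ on $R^{2n}$ and $\omega$ on $K^{2n}$ agree on $R^{2n}\times R^{2n}$). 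Hence $x\in (U\otimes_R K)^\perp\cap R^{2n}$.

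Conversely, I would take $x\in (U\otimes_R K)^\perp\cap R^{2n}$ and simply restrict the vanishing condition to the submodule $U\subseteq U\otimes_R K$: for every $m\in U$ we have $\symf{x}{m}=0$, so $x\in U^\perp$. This establishes the reverse inclusion and hence equality. There is no genuine obstacle here; the only things to be careful about are the identifications $U\hookrightarrow U\otimes_R K\hookrightarrow K^{2n}$ and the agreement of the two forms on $R^{2n}$, both of which are built into the setup. (I note in passing that only torsion-freeness of $U$ is used, not that it is a direct summand; the summand hypothesis is stated for consistency with the way the lemma is applied later.)
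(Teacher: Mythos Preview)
Your proof is correct and follows essentially the same approach as the paper: both arguments verify the two inclusions directly by using that $U\subseteq U\otimes_R K$ and that $\omega$ is $K$-bilinear, with the only cosmetic difference being the order in which the inclusions are checked. Your added remarks about the identifications and about only needing torsion-freeness are accurate and harmless.
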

\begin{proof}
First let $x \in (U\otimes_{R} K)^\perp \cap R^{2n}$. Then for all $y\in U\otimes_{R} K$, we have $\symf{x}{y}= 0$. But $U\subseteq U\otimes_{R} K$, so in particular, $\symf{x}{u} = 0$ for all $u \in U$. So as $x\in R^{2n}$, it is in $U^\perp \subseteq R^{2n}$.
Now let $x \in U^\perp$. By definition, $x \in R^{2n}$ and $\symf{x}{u} = 0$ for all $u\in U$. But then also $\symf{x}{u\otimes_{R} \frac{p}{q}} = \frac{p}{q}\symf{x}{u} = 0$ for all $u\in U$, $\frac{p}{q}\in K$. That is $x\in (U\otimes_{R} K)^\perp$, which proves the claim.
\end{proof}

\begin{lemma}
\label{lem_isotropic_summands_subspaces}
Let $U \subseteq R^{2n}$ be a summand. Then $U$ is isotropic if and only if $U\otimes_{R} K$ is. Hence, the isomorphism from \cref{iso btw summands and subspaces} restricts to a bijection between the rank-$r$ \emph{isotropic} summands of $R^{2n}$ and the dimension-$r$ \emph{isotropic} subspaces of $K^{2n}$.
\end{lemma}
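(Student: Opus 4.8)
The plan is to reduce both directions of the biconditional to the identity $U^\perp = (U\otimes_R K)^\perp \cap R^{2n}$ from \cref{lem_M_perp_intersections}, and then to combine the resulting equivalence with \cref{iso btw summands and subspaces} to get the last sentence.

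For the forward implication, suppose $U$ is isotropic, i.e.\ $U\subseteq U^\perp$. I would argue that $U$ isotropic means $U\subseteq U^\perp = (U\otimes_R K)^\perp\cap R^{2n}\subseteq (U\otimes_R K)^\perp$; since $(U\otimes_R K)^\perp$ is a $K$-subspace of $K^{2n}$ that contains $U$, and $U$ spans $U\otimes_R K$ over $K$, it follows that $(U\otimes_R K)^\perp$ contains $U\otimes_R K$, which is exactly the assertion that $U\otimes_R K$ is isotropic. (Equivalently, and more elementarily, one checks $\symf{u}{u'}=0$ for $u,u'\in U$ directly from $U\subseteq U^\perp$ and then invokes $K$-bilinearity of $\omega$ on generators of $U\otimes_R K$.)

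For the converse, suppose $U\otimes_R K$ is isotropic, so $U\otimes_R K\subseteq (U\otimes_R K)^\perp$. Intersecting both sides with $R^{2n}$ and using \cref{lem_M_perp_intersections} yields $U\subseteq (U\otimes_R K)\cap R^{2n}\subseteq (U\otimes_R K)^\perp\cap R^{2n}=U^\perp$, so $U$ is isotropic. Finally, for the ``Hence'' clause: by \cref{iso btw summands and subspaces}, the map $U\mapsto U\otimes_R K$ is a bijection from rank-$r$ summands of $R^{2n}$ to dimension-$r$ subspaces of $K^{2n}$, with inverse $V\mapsto V\cap R^{2n}$; the equivalence just proved shows this bijection and its inverse send isotropic objects to isotropic objects, hence it restricts to a bijection between the rank-$r$ isotropic summands of $R^{2n}$ and the dimension-$r$ isotropic subspaces of $K^{2n}$.

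I do not expect a genuine obstacle here; the statement is essentially bookkeeping. The only point that needs a moment of care is the passage from ``$U\subseteq (U\otimes_R K)^\perp$'' to ``$U\otimes_R K\subseteq (U\otimes_R K)^\perp$'', which relies on the orthogonal complement of any subset being automatically a $K$-subspace together with the fact that $U$ generates $U\otimes_R K$ as a $K$-vector space — or, to sidestep even this, one simply works with generators and uses $K$-bilinearity of the induced form $\omega$ on $K^{2n}$.
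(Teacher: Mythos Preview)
Your proposal is correct and follows essentially the same approach as the paper: both directions are reduced to the identity $U^\perp=(U\otimes_R K)^\perp\cap R^{2n}$ of \cref{lem_M_perp_intersections}, and the ``Hence'' clause is deduced from \cref{iso btw summands and subspaces} exactly as you do. The only cosmetic difference is in the forward implication: the paper tensors the inclusion $U\subseteq U^\perp$ with $K$ and then applies the inverse bijection of \cref{iso btw summands and subspaces} to identify $((U\otimes_R K)^\perp\cap R^{2n})\otimes_R K$ with $(U\otimes_R K)^\perp$, whereas you pass directly from $U\subseteq (U\otimes_R K)^\perp$ to $U\otimes_R K\subseteq (U\otimes_R K)^\perp$ by observing that the right-hand side is a $K$-subspace---which is arguably a hair more transparent, but amounts to the same thing.
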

\begin{proof}
If $U$ is isotropic, then $U\subseteq U^\perp$. Hence using \cref{lem_M_perp_intersections}, we get
\begin{align*}
	U\otimes_{R} K &\subseteq U^\perp \otimes_{R} K = \left((U\otimes_{R} K)^\perp \cap R^{2n}\right) \otimes_{R} K  = (U\otimes_{R} K)^\perp.
\end{align*}
Conversely, if $U\otimes_{R} K$ is isotropic, then $U\otimes_{R} K\subseteq (U\otimes_{R} K)^\perp$, so
\begin{align*}
	U = (U\otimes_{R} K) \cap R^{2n} \subseteq  (U\otimes_{R} K)^{\perp} \cap R^{2n} = U^\perp,
\end{align*}
where the last equality is again  \cref{lem_M_perp_intersections}.
\end{proof}

A consequence of \cref{lem_isotropic_summands_subspaces} is that \emph{every rank-1 submodule $I\subseteq R^{2n}$ is isotropic}:
As $I\otimes_{R} K$ is a dimension-1 subspace of $K^{2n}$, we have $I\otimes_{R} K= \text{span}(v)$ for some $v\in K^{2n}$. Since $\symf{v}{v}=0$ for any $v\in K^{2n}$, we have that  $\symf{I}{I}= 0$.

\begin{lemma}[{\cite[Lemma 3.11]{Scharlau1985}}]
\label{lem_orthogonal_complement_direct_summand}
Let $U \subseteq \Kn$ be a subspace of dimension $r$. Then $U^\perp \subseteq \Kn$ is a subspace of dimension $2n-r$.
\end{lemma}

\begin{definition}
We call a collection $\mathbf{L} = \ls L_1, \ldots, L_n, L_{-n}, \ldots, L_{-1}\rs$ of lines (i.e.~di\-men\-sion-1 subspaces) a \emph{symplectic frame of $K^{2n}$} if
\begin{equation*}
	L_i + L_j \text{ is isotropic if and only if } j\not=-i. 
\end{equation*}
\end{definition}

It is not hard to see that if $\mathbf{L}$ is a symplectic frame of $K^{2n}$, then $K^{2n} = L_1\oplus \ldots \oplus L_n \oplus L_{-n} \oplus \ldots \oplus L_{-1}$.

We call a frame $\mathbf{L}$ \emph{integral} if there is a symplectic basis $e_{1},\ldots, e_{n}, e_{-n},\ldots, e_{-1}$ of $R^{2n}$ such that for all $i$, the line $L_i = \langle e_{i}\rangle_K$ is the $K$-span of $e_i$.
As all symplectic bases of $R^{2n}$ lie in the same $\Sp{2n}{R}$-orbit, so do all integral symplectic frames.

\subsubsection{Finding subspaces with prescribed class group elements}

Below, we will need to find chains of subspaces of $K^{2n}$ that represent prescribed elements of the class group $\class(R)$. We now collect some technical lemmas for this.

The following is a special case of \cite[Proof of Proposition 5.4, Claim (1)]{Church2019}.
\begin{lemma}
\label{cfp-5.4-simplified}
    Let $M$ be a projective $R$-module with $\rk(M)\geq 2$ and let $c\in \class(R)$.
    Then there is a rank-$1$ summand $I\subset M$ such that $[I]=c$.
\end{lemma}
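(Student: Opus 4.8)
The plan is to reduce everything to the structure theory of finite-rank projective modules over the Dedekind domain $R$ recalled in \cref{sec_basics_rings_modules}: such a module is determined up to isomorphism by its rank and its class in $\class(R)$, classes add under direct sums, and every element of $\class(R)$ is the class of some rank-$1$ projective module.

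First I would set $k \coloneqq \rk(M) \geq 2$ and use the definition of the class group to pick a rank-$1$ projective module $I_0$ with $[I_0] = c$. Since $\class(R)$ is a group (written additively in the paper), $[M] - c$ is a well-defined element, so I can likewise pick a rank-$1$ projective module $L$ with $[L] = [M] - c$. Now form
\begin{equation*}
	N \coloneqq I_0 \oplus R^{k-2} \oplus L,
\end{equation*}
which makes sense precisely because $k \geq 2$ (for $k=2$ one reads $R^{0}=0$). Then $N$ has rank $k$ and class $[N] = [I_0] + [L] = c + ([M]-c) = [M]$, so by the uniqueness statement there is an isomorphism $\varphi\colon N \xrightarrow{\ \sim\ } M$.

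Finally, the summand $I_0 \oplus 0 \oplus 0$ of $N$ is carried by $\varphi$ to a direct summand $I \coloneqq \varphi(I_0\oplus 0 \oplus 0)$ of $M$; since $\varphi$ restricts to an isomorphism onto $I$, this summand has rank $1$ and $[I] = [I_0] = c$, as required.

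I do not anticipate any real obstacle here; the only point requiring attention is that the hypothesis $\rk(M)\geq 2$ is exactly what is needed for the free summand $R^{k-2}$ (equivalently, for a complementary summand of positive rank whose class can be prescribed to be $[M]-c$). For $\rk(M)=1$ the only rank-$1$ summand of $M$ is $M$ itself, with class $[M]$, so the statement genuinely fails in that case.
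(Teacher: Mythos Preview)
Your proof is correct. The paper does not actually give its own proof of this lemma; it simply records it as a special case of \cite[Proof of Proposition 5.4, Claim~(1)]{Church2019}. Your argument supplies a clean, self-contained proof using only the structure theory already recalled in \cref{sec_basics_rings_modules} (that a finite-rank projective $R$-module is determined by its rank and class, and that classes add under direct sum), so in that sense you have done more than the paper does here.
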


\begin{lemma}
\label{lem_relative_extension_of_summand}
Let $U\subset W$ be subspaces of $\Kn$ such that $U$ is isotropic, $W\subseteq U^\perp$ and $\dim(W) \geq \dim(U)+2$. Let $c\in \class(R)$. Then there is an isotropic subspace $V$ such that $\dim(V) = \dim(U)+1$, $[V] = c$ and
\begin{equation*}
	U\subset V \subset W.
\end{equation*}
\end{lemma}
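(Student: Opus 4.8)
The plan is to work inside the quotient module $W/U$ and then pull back. Since $U$ and $W$ are summands of $R^{2n}$ with $U\subseteq W$, the quotient $\overline{W} \coloneqq W/U$ is a projective $R$-module of rank $\rk(W)-\rk(U)\geq 2$. Apply \cref{cfp-5.4-simplified} to $\overline{W}$ and the class $c' \in \class(R)$, where $c'$ is chosen so that $[U]+c' = c$ (this is possible since $\class(R)$ is a group); this yields a rank-$1$ summand $\overline{I}\subset \overline{W}$ with $[\overline{I}]=c'$. Let $V$ be the preimage of $\overline{I}$ under the quotient map $W\twoheadrightarrow \overline{W}$. Then $U\subseteq V\subseteq W$, the quotient $V/U \cong \overline{I}$ is projective, so the short exact sequence $0\to U\to V\to V/U\to 0$ splits and $V\cong U\oplus \overline{I}$; hence $V$ is a summand of $W$ (and so of $R^{2n}$), $\rk(V)=\rk(U)+1$, and $[V]=[U]+[\overline{I}]=[U]+c'=c$.

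It remains to check that $V$ is isotropic. Here I would use the hypotheses $U\subseteq U^\perp$ (i.e.\ $U$ isotropic) and $W\subseteq U^\perp$. Take $x,y\in V$. Writing $V = U\oplus J$ with $J\cong\overline{I}$ a rank-$1$ (hence isotropic, by the consequence of \cref{lem_isotropic_summands_subspaces}) summand, decompose $x = u_1+j_1$, $y=u_2+j_2$. Then
\begin{equation*}
	\symf{x}{y} = \symf{u_1}{u_2}+\symf{u_1}{j_2}+\symf{j_1}{u_2}+\symf{j_1}{j_2}.
\end{equation*}
The first term vanishes since $U$ is isotropic; the last vanishes since $J$ has rank $1$. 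For the cross terms: $j_1,j_2\in V\subseteq W\subseteq U^\perp$, and $u_1,u_2\in U$, so $\symf{u_1}{j_2} = -\symf{j_2}{u_1} = 0$ and $\symf{j_1}{u_2}=0$ because $j_1,j_2\in U^\perp$ are orthogonal to everything in $U$. (Skew-symmetry of $\omega$, noted in the excerpt, is what lets me switch the arguments.) Hence $\symf{x}{y}=0$ and $V\subseteq V^\perp$.

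The only genuine subtlety I anticipate is making sure the pullback $V$ really is a \emph{summand} of $R^{2n}$ and not merely a submodule of the right rank: this is exactly where I use that $V/U$ is projective so that $V\cong U\oplus(V/U)$ splits, together with the fact that $U$ is a summand of $W$ and $W$ is a summand of $R^{2n}$, and the transitivity of being a summand noted in \cref{sec_basics_rings_modules} (if $A$ is a summand of $B$ and $B$ is a summand of $C$, then $A$ is a summand of $C$). Everything else — the rank count, the class group bookkeeping via $c' = c-[U]$, and the isotropy verification above — is routine. A cleaner alternative for the isotropy step, avoiding the explicit splitting, is to observe that $V\subseteq W\subseteq U^\perp$ and $V/U$ has rank $1$: then $V\otimes_R K$ is spanned by $U\otimes_R K$ together with a single vector $v$, and since $U\otimes_R K$ is isotropic, lies in $(U\otimes_R K)^\perp \supseteq V\otimes_R K$, and $\symf{v}{v}=0$, the whole space $V\otimes_R K$ is isotropic; then \cref{lem_isotropic_summands_subspaces} transfers this back to $V$. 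I would likely present this second version as it is shorter.
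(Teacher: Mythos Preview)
Your proof is correct and follows essentially the same approach as the paper: the paper simply chooses a complement $U'$ with $W = U\oplus U'$ and applies \cref{cfp-5.4-simplified} inside $U'$ to get $I$ with $[I]=c-[U]$, then sets $V=U\oplus I$, which is exactly your quotient-and-pullback argument phrased with a fixed splitting. The only wrinkle is that your ``hence $V$ is a summand of $W$'' does not follow from the splitting of $0\to U\to V\to V/U\to 0$; it follows because $\overline{I}$ is a summand of $W/U$, so $W/V$ is projective---the paper's direct-complement formulation sidesteps this small bookkeeping step.
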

\begin{proof}
We first prove the analogous result for summands of $\Rn$, i.e given summands $U_{0}\subset W_{0}$ of $\Rn$ such that $U_{0}$ is isotropic, $W_{0}\subseteq U_{0}^{\perp}$, and $\rk(W_{0})\geq \rk(U_{0})+2$, there is an isotropic summand $V_{0}$ such that $\rk(V_{0})=\rk(U_{0})+1$, $[V_{0}]=c$ and $U_{0}\subset V_{0} \subset W_{0}$.

As noted in \cref{sec_basics_rings_modules}, $U_{0}$ is a summand of $W_{0}$, so we can write $W_{0} = U_{0} \oplus U_{0}'$. As $\rk(W_{0}) \geq \rk(U_{0})+2$, we have $\rk(U_{0}')\geq 2$.
By \cref{cfp-5.4-simplified}, we can choose a rank-1 summand $I$ in $U_{0}'$ such that $[I]=c-[U_{0}]$. Set  
\begin{equation*}
	V_{0} \coloneqq U_{0}\oplus I.
\end{equation*}    
By definition, we have $U_{0}\subset V_{0} \subset W_{0}$ and $\rk(V_{0}) = \rk(U_{0})+1$. Both $U_{0}$ and $I$ are isotropic as $I$ is contained in $W_{0}\subseteq U_{0}^\perp$, this implies that $V_{0}$ is isotropic.
Furthermore, we have $[V_{0}]=[U_{0}]+ [I] = [U_{0}] + (c-[U_{0}]) = c$.

Now we prove the original result for vector spaces. Set $U_{0}\colonequals U\cap \Rn$ and $W_{0}\colonequals W\cap \Rn$. By \cref{lem_isotropic_summands_subspaces}, these are isotropic summands with $\rk(U_0) = \dim(U)$ and $\rk(W_0) = \dim(W)$. We have that $W_{0}$ is contained in $U_{0}^{\perp}$ since $U_{0}^{\perp}= U^{\perp}\cap \Rn$ by \cref{lem_M_perp_intersections}. By the previous paragraph, we can pick an isotropic summand $V_{0}$ such that $\rk(V_{0})=\rk(U_{0})+1$, $[V_{0}]=c$, and $U_{0}\subset V_{0} \subset W_{0}$. Let $V=V_{0}\otimes_{R}K$. We have that $\dim(V) = \rk(V_{0})=\dim(U)+1$ and $U\subset V \subset W$. We also have that $[V]=c$ since $[V]=[V\cap \Rn]=[V_{0}]=c$. 
\end{proof}

The next lemma is an example of an application of \cref{lem_relative_extension_of_summand} that we will need later on.

\begin{lemma}\label{chain of istropics}
Let $c_{1},\ldots, c_{n} \in \class(R)$. Then there exists a chain $U_{1}\subset \cdots \subset U_{n}$ of isotropic subspaces of $\Kn$ such that $\dim(U_{i})= i$ and $[U_{i}]=c_{i}$.
\end{lemma}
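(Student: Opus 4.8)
The plan is to build the chain $U_1 \subset \cdots \subset U_n$ inductively, using \cref{lem_relative_extension_of_summand} at each step with $W$ a fixed isotropic summand of rank $n$. First I would fix a rank-$n$ isotropic summand $W \subseteq R^{2n}$; such a $W$ exists because, by \cref{lem_isotropic_summands_subspaces}, isotropic rank-$n$ summands of $R^{2n}$ correspond to Lagrangian (maximal isotropic) subspaces of $K^{2n}$, and these certainly exist over the field $K$ (for instance, pick a symplectic basis and take the span of half of it). Note that since $W$ is isotropic, $W \subseteq W^\perp$ automatically.

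Next I would set up the induction. For the base case, apply \cref{cfp-5.4-simplified} to $M = W$ (which has rank $n \geq 2$ if $n \geq 2$; the case $n = 1$ is handled separately below) to obtain a rank-$1$ summand $U_1 \subseteq W$ with $[U_1] = c_1$. Since $U_1$ is rank $1$ it is automatically isotropic (as noted after \cref{lem_isotropic_summands_subspaces}), and $\rk(U_1) = 1 = [U_1 \text{ has the right rank}]$. For the inductive step, suppose we have an isotropic summand $U_i \subset W$ with $\rk(U_i) = i < n$ and $[U_i] = c_i$. To apply \cref{lem_relative_extension_of_summand} with this $U_i$ and $W$, I need: $U_i$ isotropic (yes), $W \subseteq U_i^\perp$ (yes, since $U_i \subseteq W$ and $W$ is isotropic, so $W \subseteq W^\perp \subseteq U_i^\perp$), and $\rk(W) = n \geq \rk(U_i) + 2 = i + 2$, i.e.\ $i \leq n - 2$. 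The lemma then produces an isotropic summand $U_{i+1}$ with $U_i \subset U_{i+1} \subset W$, $\rk(U_{i+1}) = i+1$, and $[U_{i+1}] = c_{i+1}$.

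The one gap in the above is the very last step, going from $U_{n-1}$ to $U_n$: there the rank hypothesis $\rk(W) \geq \rk(U_{n-1}) + 2$ fails since $\rk(W) = n = (n-1) + 1$. So I would instead stop the inductive construction at $U_{n-1}$ and handle the final extension directly: $U_{n-1}$ is a rank-$(n-1)$ summand of the rank-$n$ summand $W$, so $W = U_{n-1} \oplus I$ for a rank-$1$ summand $I$, and I simply set $U_n \coloneqq W$. Then $U_{n-1} \subset U_n$, $\rk(U_n) = n$, $U_n = W$ is isotropic, and I need $[U_n] = c_n$ — but this forces $[W] = c_n$, which is generally false for my arbitrary initial choice of $W$. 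The fix is to choose $W$ at the outset to already have class $c_n$: a Lagrangian summand of $R^{2n}$ can have arbitrary prescribed class in $\class(R)$. I would justify this either by the same argument as \cref{cfp-5.4-simplified} (choose a symplectic basis $e_1, f_1, \ldots, e_n, f_n$, replace $e_n$ by a suitable element of $\operatorname{span}(e_n, e_1)$ to twist the class, keeping isotropy), or by citing that \cref{lem_relative_extension_of_summand}-style reasoning lets one adjust the class of the top piece. Finally, the case $n = 1$ is immediate from \cref{cfp-5.4-simplified} applied to $M = R^2$.

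The main obstacle is the endpoint: arranging that the top module $U_n$, which must be a rank-$n$ (hence Lagrangian, hence not-extendable-within-$R^{2n}$-by-the-lemma) isotropic summand, nonetheless has the prescribed class $c_n$. Everything else is a routine induction feeding \cref{lem_relative_extension_of_summand}. I expect the cleanest writeup picks $W$ to be an isotropic rank-$n$ summand with $[W] = c_n$ from the start (proving existence of such $W$ by an explicit symplectic-basis manipulation), then runs the induction $U_1 \subset \cdots \subset U_{n-1} \subset U_n = W$, checking at each intermediate step $i \leq n-2$ that the hypotheses of \cref{lem_relative_extension_of_summand} hold as above.
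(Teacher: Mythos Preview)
Your approach is correct but takes a different route than the paper. The paper does not fix a Lagrangian $W$ in advance; instead, at each inductive step it takes $W = U_i^\perp$ and applies \cref{lem_relative_extension_of_summand} to the pair $U_i \subset U_i^\perp$. Since $\rk(U_i^\perp) = 2n - i$ by \cref{lem_orthogonal_complement_direct_summand}, the rank hypothesis $2n - i \geq i + 2$ holds for every $i \leq n-1$, so the induction runs all the way up to $U_n$ with no special treatment of the final step. This sidesteps entirely the ``endpoint obstacle'' you identified, and in particular avoids having to produce a Lagrangian with prescribed class.

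One caution about your sketch for that auxiliary step: the suggestion ``replace $e_n$ by a suitable element of $\spn(e_n, e_1)$'' does not work, because any rank-$n$ summand contained in the free Lagrangian $\spn(e_1,\ldots,e_n)$ is that Lagrangian itself and hence has trivial class. To twist the class you must leave the fixed Lagrangian; e.g.\ take a rank-$1$ summand $I \subset \spn(e_n,f_n)$ with $[I]=c_n$ (via \cref{cfp-5.4-simplified}) and set $W = \spn(e_1,\ldots,e_{n-1}) \oplus I$, which is isotropic and has class $c_n$. Alternatively, your second suggestion --- using \cref{lem_relative_extension_of_summand} itself to extend a rank-$(n-1)$ isotropic summand inside its orthogonal complement --- works and is essentially a special case of the paper's argument. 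Either way your proof goes through, but the paper's version is shorter and more uniform.
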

\begin{proof}
We first find $U_1\subset \Kn$ by \cref{cfp-5.4-simplified} and \cref{lem_isotropic_summands_subspaces}.
Now assume by induction that for some $i < n$, we have constructed $U_{1}\subset \cdots \subset U_{i}$ with the desired properties. By \cref{lem_orthogonal_complement_direct_summand}, $U_{i}^{\perp}$ is a subspace of dimension $2n-i \geq i+2$.
Hence applying \cref{lem_relative_extension_of_summand} to the pair $U_{i}\subset U_{i}^{\perp}$, we find an isotropic subspace $U_{i+1}$ such that $\dim(U_{i+1}) = i+1$, $[U_{i+1}] = c_{i+1}$ and $U_{i}\subset U_{i+1}$.
\end{proof}

\subsection{The group of signed permutations}
\label{sec_signed_perm_group}
Let
\begin{equation*}
	[n] = \ls 1, \ldots, n \rs \text{ and } [\pm n] = \ls 1, \ldots, n, -n, \ldots, -1 \rs.
\end{equation*}
We denote by $\signp$ the group of signed permutations of rank $n$. That is, $\signp$ is the group of all permutations $\sigma$ of $[\pm n]$ that satisfy
\begin{equation*}
	\sigma(-i) = -\sigma(i).
\end{equation*}
We multiply the elements of $\signp$ as functions, i.e. 
\begin{equation*}
	(\sigma \sigma')(i) = \sigma (\sigma'(i)).
\end{equation*}
The group $\signp$ is the finite Coxeter group of type $\mathtt{C}_n = \mathtt{B}_n$. It has a generating set $S$ of simple reflections $s_1, \ldots, s_n$ given by
\begin{align*}
	s_i &= [1, \ldots, i-1, i+1,i, i+2, \ldots, n] \text{ for } i <n, \\
	s_n &= [1, \ldots, n-1, -n],
\end{align*}
i.e.~the $n-1$ adjacent transpositions and the element exchanging $n$ and $-n$. (Here, we use the ``window'' notation, see \cite[Section 8.1]{BB:CombinatoricsCoxetergroups}.)
We write 
\begin{equation*}
	\len(\sigma) = \len_S(\sigma)
\end{equation*}
for the word length of $\sigma$ with respect to the generating set $S = \ls s_1, \ldots, s_n \rs$.

\subsection{Symplectic buildings}
\label{sec_buildings}
(Spherical) buildings are a class of simplicial complexes that have a rich structure theory. For an introduction and general reference, see \cite{AB:Buildings}.
In this work, we are only concerned with one specific type of building:
Let $\buildingssymp(K^{2n})$ denote the poset of non-trivial isotropic subspaces of $K^{2n}$. It comes with a natural action of $\Sp{2n}{K}$ and its subgroup $\Sp{2n}{R}$.
The order complex (or realization) of this poset, which we also write as $\buildingssymp(K^{2n})$, is an $(n-1)$-dimensional simplicial complex, the \emph{spherical building of type $\mathtt{C}_n $ over $K$}. 

The building $\buildingssymp(K^{2n})$  has a system of distinguished subcomplexes, so-called \emph{apartments}:
Let $\mathbf{L} = \ls L_1, \ldots, L_n, L_{-n}, \ldots, L_{-1}\rs$ be a symplectic frame of $K^{2n}$. Note that if $\sigma\in \signp$ is a signed permutation and $m \leq n$, then the direct sum
\begin{equation*}
	L_{\sigma([m])}\coloneqq\bigoplus_{i\in \sigma([m])} L_i
\end{equation*}
is an isotropic subspace.
We let $\apartment_{\mathbf{L}}$ be the subcomplex of $\buildingssymp(K^{2n})$ spanned by all subspaces of this form. This subcomplex is called an \emph{apartment}.
Its maximal simplices (the ``chambers'') are given by chains of the form
\begin{equation}
\label{eq_L_sigma}
	\mathbf{L}_{\sigma}\coloneqq  L_{\sigma([1])} \subset \ldots \subset L_{\sigma([n])}.
\end{equation}
It is isomorphic to the barycentric subdivision of the boundary of an $n$-dimensional cross-polytope (the Coxeter complex of $\signp$, see the left of \cref{fig_motivation}). In particular, its geometric realization is homeomorphic to an $(n-1)$-sphere. The inclusion $\apartment_{\mathbf{L}} \hookrightarrow \buildingssymp(K^{2n})$ sends its fundamental class to a non-trivial \emph{apartment class}\footnote{This class is only determined up to a choice of sign for the fundamental class. This ambiguity is however not relevant for our considerations, so we will neglect it from now on.} given by
\begin{equation*}
	[\apartment_{\mathbf{L}}] \coloneqq \sum_{\sigma\in\signp} (-1)^{\len(\sigma)}\mathbf{L}_{\sigma} \in \tilde{H}_{n-1}(\buildingssymp(K^{2n})).
\end{equation*}
The apartment $\apartment_{\mathbf{L}}$ and its class $[\apartment_{\mathbf{L}}]$ are called \emph{integral} if the frame $\mathbf{L}$ is integral. As all integral frames lie in the same $\Sp{2n}{R}$-orbit, so do all integral apartments.
\newline

By the Solomon--Tits Theorem \cite{Sol:Steinbergcharacterfinite}, the geometric realization of $\buildingssymp(K^{2n})$ is homotopy equivalent to a wedge of $(n-1)$-spheres and its only non-trivial reduced homology group, 
which we denote by 
\begin{equation*}
	\Stsymp = \Stsymp_n(K) \coloneqq \tilde{H}_{n-1}(\buildingssymp(K^{2n})),
\end{equation*}
is generated by apartment classes. We call $\Stsymp$ the \emph{(symplectic) Steinberg module}.

Borel--Serre \cite{BS:Cornersarithmeticgroups} showed that for number fields $K$, the Steinberg module is closely related to the (co-)homology of the arithmetic group $\Sp{2n}{\mcO_K}$. The following is a special case of their result.
\begin{theorem}[Borel--Serre duality]
\label{thm_Borel_Serre}
Let $K$ be a number field and $R = \mcO_K$ its ring of integers. 
For all $i$, there is an isomorphism
\begin{equation*}
	H^{\nu_{n} -i}( \Sp{2n}{R};\mbQ) \cong H_i(\Sp{2n}{R}; \Stsymp \otimes_{\mbZ} \mbQ),
\end{equation*}
where $\nu_{n}$ is the virtual cohomological dimension of $\Sp{2n}{R}$.
\end{theorem}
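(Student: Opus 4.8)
The plan is to deduce the statement from Borel--Serre's theorem \cite{BS:Cornersarithmeticgroups} that arithmetic groups are virtual duality groups, together with their identification of the dualizing module with the Steinberg module. Regard $\Sp{2n}{R}$ as an arithmetic subgroup of $\mathbf{G}(\mbQ)$, where $\mathbf{G}$ is the connected semisimple $\mbQ$-group obtained from $\operatorname{Sp}_{2n}$ over $K$ by restriction of scalars. Restriction of scalars preserves ranks, so the $\mbQ$-rank of $\mathbf{G}$ equals the $K$-rank of $\operatorname{Sp}_{2n}$; as every non-degenerate alternating form over a field is split, this rank equals $n$, and in particular $\mathbf{G}$ is $\mbQ$-isotropic. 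Moreover, parabolic $\mbQ$-subgroups of $\mathbf{G}$ correspond, compatibly with inclusions, to parabolic $K$-subgroups of $\operatorname{Sp}_{2n}$, i.e.\ to stabilizers of non-trivial isotropic flags in $K^{2n}$; hence the rational Tits building of $\mathbf{G}$ is canonically $\Sp{2n}{R}$-isomorphic to $\buildingssymp(K^{2n})$.

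Next I would invoke the Borel--Serre bordification: the symmetric space $X$ of $\mathbf{G}(\mathbb{R})$ admits a $\Sp{2n}{R}$-equivariant bordification $\overline{X}$, a contractible manifold with corners on which $\Sp{2n}{R}$ acts properly with compact quotient, and whose boundary $\partial\overline{X}$ is $\Sp{2n}{R}$-equivariantly homotopy equivalent to (the realization of) the rational Tits building of $\mathbf{G}$; moreover $\nu_{n} = \dim X - n$. By the Solomon--Tits theorem \cite{Sol:Steinbergcharacterfinite}, this building is homotopy equivalent to a wedge of $(n-1)$-spheres, so its reduced homology is free and concentrated in degree $n-1$. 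Combining Lefschetz duality for the manifold-with-corners $\overline{X}$ with its contractibility and the description of its boundary, every torsion-free finite-index subgroup $\Gamma'\leq\Sp{2n}{R}$ is a duality group of dimension $\nu_{n}$ whose dualizing module is, as a $\mbZ[\Gamma']$-module,
\begin{equation*}
	D \;\cong\; H^{\nu_{n}}_{c}(\overline{X}) \;\cong\; \tilde{H}_{n-1}(\partial\overline{X}) \;\cong\; \tilde{H}_{n-1}(\buildingssymp(K^{2n})) \;=\; \Stsymp .
\end{equation*}
This is the content of \cite[Section~11]{BS:Cornersarithmeticgroups}, and it yields, for every $\mbZ[\Gamma']$-module $M$ and every $i$, a natural isomorphism $H^{\nu_{n}-i}(\Gamma'; M) \cong H_{i}(\Gamma'; \Stsymp \otimes_{\mbZ} M)$ given by cap product with a fundamental class in $H_{\nu_{n}}(\Gamma'; \Stsymp)$.

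Finally I would pass from $\Gamma'$ to $\Gamma := \Sp{2n}{R}$ using rational coefficients. Fix $\Gamma'$ as above, chosen moreover to be normal in $\Gamma$ (torsion-free finite-index normal subgroups exist), and set $Q = \Gamma/\Gamma'$. Taking $M = \mbQ$ with trivial action, $Q$ acts on both $H^{\nu_{n}-i}(\Gamma';\mbQ)$ and $H_{i}(\Gamma';\Stsymp\otimes_{\mbZ}\mbQ)$, and the isomorphism between them is $Q$-equivariant by naturality of the cap product. Since $|Q|$ is invertible in $\mbQ$, the Lyndon--Hochschild--Serre spectral sequences degenerate and identify $H^{*}(\Gamma;\mbQ)$ with the $Q$-invariants of $H^{*}(\Gamma';\mbQ)$ and $H_{*}(\Gamma;\Stsymp\otimes_{\mbZ}\mbQ)$ with the $Q$-coinvariants of $H_{*}(\Gamma';\Stsymp\otimes_{\mbZ}\mbQ)$; passing to $Q$-invariants (resp.\ coinvariants) in the isomorphism above gives the asserted $H^{\nu_{n}-i}(\Sp{2n}{R};\mbQ)\cong H_{i}(\Sp{2n}{R};\Stsymp\otimes_{\mbZ}\mbQ)$. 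I expect the only genuinely deep input to be the homotopy equivalence between $\partial\overline{X}$ and the rational Tits building, hence the identification $D\cong\Stsymp$, which is the technical heart of \cite{BS:Cornersarithmeticgroups}; the reduction via restriction of scalars, the homological algebra of duality groups, and the transfer argument in characteristic zero are all formal.
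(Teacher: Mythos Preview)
The paper does not prove this theorem; it is stated as a special case of Borel--Serre \cite{BS:Cornersarithmeticgroups} and used as a black box. Your outline is a correct unpacking of that citation---Weil restriction to view $\Sp{2n}{R}$ as a $\mbQ$-arithmetic group, identification of the rational Tits building with $\buildingssymp(K^{2n})$, duality for a torsion-free finite-index subgroup via the bordification with dualizing module $\Stsymp$, and passage to $\Sp{2n}{R}$ over $\mbQ$ by transfer---and is exactly how one would derive the statement from the cited source.
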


\section{Surjectivity on apartment classes}
\label{sec_surjectivity_symplectic}
\cref{thm_Borel_Serre} in particular implies that $H^{\nu_{n}}(\Sp{2n}{R}; \mbQ) \cong(\Stsymp\otimes_{\mbZ} \mbQ)_{\Sp{2n}{R}}$. Hence, showing that $H^{\nu_{n}}(\Sp{2n}{R}; \mbQ)$ is non-trivial (which is our goal for proving \cref{thm_non_trivial_cohomology}) is equivalent to showing that the coinvariants $(\Stsymp\otimes_{\mbZ} \mbQ)_{\Sp{2n}{R}}$ do not vanish. In order to do this, we will find a surjection from $(\Stsymp\otimes_{\mbZ} \mbQ)_{\Sp{2n}{R}}$ to a vector space of sufficiently high dimension.
We construct this surjection on the level of simplicial complexes in this section.

\subsection{The complex $X_n$ and the map $\psi$}
\label{sec_defn_X_n}
The poset $X_n(\class(R))$ (see \cite[Example 5.3]{Church2019}) has underlying set $[n]\times \class(R)$ and the order relation is given by $(i, a)<(j, b)$ if and only if $i<j$.

The order complex of this poset, which we also denote by $X_n(\class(R))$, is an iterated join of $n$ copies of the discrete set $\class(R)$. By using that $\hcl=\bigvee_{|\hcl|-1} S^{0}$, the geometric realization $|X_n(\class(R))|$ is homotopy equivalent to a wedge of $(|\class(R)|-1)^n$ many $(n-1)$-spheres:
\begin{align*}
  |X_n(\class(R))|\simeq \ast_{i=1}^{n}\bigvee_{|\hcl|-1} S^{0}&\simeq\bigvee_{(|\hcl|-1)^{n}} S^{0}\ast \cdots \ast S^{0} \hspace{3mm} (\text{n times})\\
  & \simeq \bigvee_{(|\hcl|-1)^{n}} S^{n-1}.
\end{align*}
In particular,
we have that 
$\tilde{H}_{n-1}(|X_n(\class(R))|)\cong \big(\tilde{H}_{0}(\bigvee_{|\hcl|-1} S^{0})\big)^{\otimes n}.$
We have the following description of $\tilde{H}_{n-1}(|X_n(\class(R))|)$ in terms of $n$-tuples of pairs of distinct elements of $\hcl$.

For each maximal chain $(1, a_1 )<(2, a_2 )<\ldots<(n, a_n )$ of $X_n(\class(R))$, there is a corresponding maximal simplex in the order complex, which we denote by $(a_1 ,\ldots, a_n )$.
Let $\mathbf{S} = (S_1, \ldots, S_n)$ be an $n$-tuple
of pairs $S_i = (a_i , b_i)\in \class(R)^{2}$, with $ a_i \neq b_i$.

For $\mathbf{e} = (\epsilon_1, \ldots, \epsilon_n)\in \ls 0,1 \rs^n$, we set 
\begin{equation*}
	c_i^{\mathbf{e}} = 
\begin{cases}
b_i & \epsilon_i = 0 \\
a_i & \epsilon_i = 1.
\end{cases}
\end{equation*}
We define $\apartment_{\mathbf{S}}$ to be the subcomplex of $X_n(\class(R))$ spanned by the $2^n$ simplices $C_{\mathbf{e}} = (c_1^{\mathbf{e}}, \ldots, c_n^{\mathbf{e}})$. The geometric realization of this subcomplex is homeomorphic to an $(n-1)$-sphere, given as the join of $n$ copies of $S^0$. Its fundamental class in $\tilde{H}_{n-1}(X_n(\class(R))$ is given by
\begin{equation*}
	[\apartment_{\mathbf{S}}] \coloneqq \sum_{\mathbf{e}\in \ls 0,1 \rs^n} (-1)^{\sum \epsilon_i}[C_{\mathbf{e}}].
\end{equation*}
The homology group $\tilde{H}_{n-1}(X_n(\class(R))$ is generated by the classes $[\apartment_{\mathbf{S}}]$, where $\mathbf{S}$ ranges over the set of n-tuples of pairs of distinct elements of $\class(R)$.

There is a poset map $\psi$ defined by 
\begin{align*}
	\psi\colon \buildingssymp(\Kn) &\to X_{n}(\class(R)) \\
	U &\mapsto (\dim(U),[U]).
\end{align*}
It is invariant under the action of $\Sp{2n}{R}$ on $\buildingssymp(\Kn)$, i.e.~for $g\in \Sp{2n}{R}$, we have $\psi(g(U))= \psi(U)$. Using \cref{chain of istropics}, one can see that $\psi$ is surjective.
The goal of this section is to show that it also induces a surjective map on homology:

\begin{proposition}
\label{prop_homology_surjectivity}
The induced map 
\begin{equation*}
	\psi_{*}\colon \Stsymp = \tilde{H}_{n-1}(\buildingssymp(\Kn))\to \tilde{H}_{n-1}(X_{n}(\hcl))\cong \mbZ^{(|\class(R)|-1)^n}
\end{equation*}
is surjective.
\end{proposition}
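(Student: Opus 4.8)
Write $\mathbf{S}=(S_1,\dots,S_n)$ with $S_m=\set{a_m,b_m}$, $a_m\neq b_m$. Since $\tilde{H}_{n-1}(X_n(\class(R)))$ is generated by the apartment classes $[\apartment_\mathbf{S}]$ (see \cref{sec_defn_X_n}) and $\Stsymp$ is generated by the apartment classes $[\apartment_\mathbf{I}]$ of symplectic frames $\mathbf{I}$ of $R^{2n}$ (Solomon--Tits; see \cref{sec_buildings}), it suffices to exhibit, for each $\mathbf{S}$, a symplectic frame $\mathbf{I}=\set{I_1,\dots,I_n,I_{-n},\dots,I_{-1}}$ of $R^{2n}$ with $\psi_*([\apartment_\mathbf{I}])=\pm[\apartment_\mathbf{S}]$. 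Then $[\apartment_\mathbf{S}]\in\im(\psi_*)$, and letting $\mathbf{S}$ vary yields surjectivity.

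The plan is to prescribe the combinatorial type of every chamber of $\apartment_\mathbf{I}$ under $\psi$. For $J\subseteq[\pm n]$ containing no pair $\set{i,-i}$, write $I_J\coloneqq\sat{\sum_{i\in J}I_i}$; by \cref{lem_isotropic_summands_subspaces} and \cref{eq_I_sigma} we have $\rk(I_J)=|J|$, and the chamber $\mathbf{I}_\sigma$ has vertices $I_{\sigma([1])}\subset\cdots\subset I_{\sigma([n])}$, so $\psi$ sends $\mathbf{I}_\sigma$ to the chamber $\bigl([I_{\sigma([1])}],\dots,[I_{\sigma([n])}]\bigr)$ of $X_n(\class(R))$. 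I claim it suffices to build $\mathbf{I}$ so that
\begin{equation}
\label{eq_plan_rule}
	[I_J]=\begin{cases} b_{|J|}, & J=\set{1,2,\dots,|J|},\\ a_{|J|}, & \text{otherwise.}\end{cases}
\end{equation}
Granting \cref{eq_plan_rule}, each $\psi(\mathbf{I}_\sigma)$ is a chamber $C_{\mathbf{e}(\sigma)}$ of $\apartment_\mathbf{S}$, where $\mathbf{e}(\sigma)=(\epsilon_1,\dots,\epsilon_n)\in\set{0,1}^n$ is given by $\epsilon_m=0\iff\sigma([m])=\set{1,\dots,m}$. Hence
\begin{equation*}
	\psi_*([\apartment_\mathbf{I}])=\sum_{\sigma\in\signp}(-1)^{\len(\sigma)}[C_{\mathbf{e}(\sigma)}]=\sum_{\mathbf{e}\in\set{0,1}^n}\Bigl(\sum_{\sigma:\,\mathbf{e}(\sigma)=\mathbf{e}}(-1)^{\len(\sigma)}\Bigr)[C_{\mathbf{e}}],
\end{equation*}
so everything reduces to the identity $\sum_{\sigma:\,\mathbf{e}(\sigma)=\mathbf{e}}(-1)^{\len(\sigma)}=(-1)^{\epsilon_1+\dots+\epsilon_n}$ for each $\mathbf{e}$, which is exactly the coefficient of $[C_\mathbf{e}]$ in $[\apartment_\mathbf{S}]$.

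This identity is a fact about the Coxeter group $\signp$. The condition $\mathbf{e}(\sigma)=\mathbf{e}$ fixes the set $\set{m:\sigma([m])=\set{1,\dots,m}}$; using the standard parabolic $S_n=\langle s_1,\dots,s_{n-1}\rangle\le\signp$ one checks that $\sigma$ then factors, length-additively, as a product of ``prefix-connected'' symmetric-group elements on the intervals between consecutive elements of $\set{m:\epsilon_m=0}$, followed by a ``prefix-connected'' signed permutation on the remaining top interval. Since $\sum_{\rho\in S_d}(-1)^{\len(\rho)}$ equals $1$ for $d\leq 1$ and $0$ for $d\geq 2$, while $\sum_{\tau\in\signp[d]}(-1)^{\len(\tau)}$ equals $1$ for $d=0$ and $0$ for $d\geq 1$, the usual generating-function inversion shows that the signed number of $d$-element prefix-connected elements is $(-1)^{d-1}$ in the symmetric groups and $(-1)^{d}$ in the groups $\signp[d]$; multiplying these over the blocks yields $(-1)^{\epsilon_1+\dots+\epsilon_n}$, as needed. (Geometrically, decomposing $\signp$ according to $\set{m:\sigma([m])=\set{1,\dots,m}}$ amounts to cutting $\apartment_\mathbf{I}$ into the $2^n$ closed stars of its Lagrangian vertices --- the subdivided ``orthant facets'' of the cross-polytope --- and recognising each such star as a type-$\mathtt{A}$ apartment, which reduces this portion of the count to the $\on{SL}_n$ computation of \cite{Church2019}.)

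It remains to construct a symplectic frame satisfying \cref{eq_plan_rule}, and this is the main obstacle. Using \cref{chain of istropics} with $c_m=b_m$, pick an isotropic flag of summands $W_1\subset\cdots\subset W_n$ of $R^{2n}$ with $\rk(W_m)=m$ and $[W_m]=b_m$; these will be the $I_{\set{1,\dots,m}}$. Now build the positive lines $I_1,\dots,I_n$ inside $W_n$ with $\sat{I_1\oplus\cdots\oplus I_m}=W_m$ for all $m$; at the $m$-th step, \cref{cfp-5.4-simplified} and \cref{lem_relative_extension_of_summand} give enough room to additionally force $[I_J]=a_{|J|}$ for each of the finitely many admissible $J\subseteq\set{1,\dots,m}$ with $J\neq\set{1,\dots,|J|}$ --- here one uses crucially that, by \cref{eq_direct_summands_intersection}, the class $[I_J]$ is not determined by the $[I_i]$ alone but also by the saturation defect of $\sum_{i\in J}I_i$, which therefore can (and must) be steered deliberately. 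One then extends to a symplectic basis: with the help of \cref{lem_orthogonal_complement_direct_summand} to control orthogonal complements, build the negative lines $I_{-1},\dots,I_{-n}$ inside a Lagrangian complement of $W_n$ with $\symf{I_i}{I_{-i}}\neq 0$ and $\symf{I_i}{I_{-j}}=0$ for $i\neq j$, once more invoking \cref{cfp-5.4-simplified} at each step to hit the class $a_{|J|}$ for every remaining admissible $J$. The difference with the $\on{SL}_n$ setting of \cite{Church2019} --- and the reason the argument is more intricate --- is that one must maintain the isotropy condition $\symf{I_i}{I_j}=0$ for all $j\neq -i$ throughout, not merely keep control of a single complete flag. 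Once such a frame is at hand, the discussion above gives $\psi_*([\apartment_\mathbf{I}])=[\apartment_\mathbf{S}]$ up to sign, which completes the proof.
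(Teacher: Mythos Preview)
Your signed Coxeter identity is correct, and the strategy would work if one could build a frame with $[I_J]=b_{|J|}$ when $J=\{1,\dots,|J|\}$ and $[I_J]=a_{|J|}$ otherwise. But producing such a frame is the entire difficulty, and you have not done it: that condition prescribes $[I_J]$ for \emph{every} admissible $J\subseteq[\pm n]$ --- all $3^n-1$ of them --- yet at step $m$ you choose a single rank-$1$ summand $I_m$, and \cref{cfp-5.4-simplified} and \cref{lem_relative_extension_of_summand} let you fix one class per choice, not the $2^{m-1}$ classes $[I_J]$ for $m\in J\subseteq[m]$ simultaneously. You assert that the saturation defects ``can (and must) be steered deliberately'' but give no mechanism for steering more than one per step; these classes are not independent parameters that a single choice of line can set freely. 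The negative side is vaguer still: placing all $I_{-j}$ in one Lagrangian complement of $W_n$ creates further relations among the $[I_J]$ that you do not address.

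The paper circumvents this by \emph{not} attempting to control every $[I_J]$. It builds $\mathbf{I}$ so that only vertices of the form $I_{[m]}$ or $I_{[m-1]\cup\{j\}}$ have prescribed class (\cref{sat of some frames}); these are exactly the vertices appearing in chambers $\mathbf{I}_\sigma$ with $\sigma$ a subword of the Coxeter word $s_1\cdots s_n$ (the ``good'' elements, \cref{def_good_bad_elements}), and for those one obtains $\psi_*([\apartment_{\mathbf{I}}]^{\on{good}})=[\apartment_{\mathbf{S}}]$ directly. The ``bad'' chambers --- whose vertex classes are \emph{not} controlled --- are then shown to cancel in pairs under $\psi$ via an explicit sign-reversing involution $\iota$ on bad elements of $\signp$ with $\psi(\mathbf{I}_\sigma)=\psi(\mathbf{I}_{\iota(\sigma)})$. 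The authors note in \cref{sec_motivation} that they ``cannot control the class group elements of most of the other vertices'', so this cancellation argument is the essential step your proposal is missing.
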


\label{sec_beginning_of_proof_hom_surj}
As $\tilde{H}_{n-1}(X_{n}(\hcl))$ is generated by the classes $[\apartment_{\mathbf{S}}]$, it is sufficient to find a preimage for each such class. We now fix an $n$-tuple $\mathbf{S} = (S_1, \ldots, S_n)$, where $S_i = (a_i , b_i)\in \class(R)^{2}$, with $ a_i \neq b_i$.
Our aim is to construct a symplectic frame $\mathbf{L}$ of $\Kn$ such that $\psi_{*}$ maps the apartment class $[\apartment_{\mathbf{L}}]$ to $[\apartment_{\mathbf{S}}]$.

\subsection{Proof of \cref{prop_homology_surjectivity}, Step 1: Constructing $\mathbf{L}$}
\label{sec_psi_induces_surjectivity}

\subsubsection{Motivation for $n=3$}
\label{sec_motivation}
\begin{figure}
\begin{center}
\includegraphics{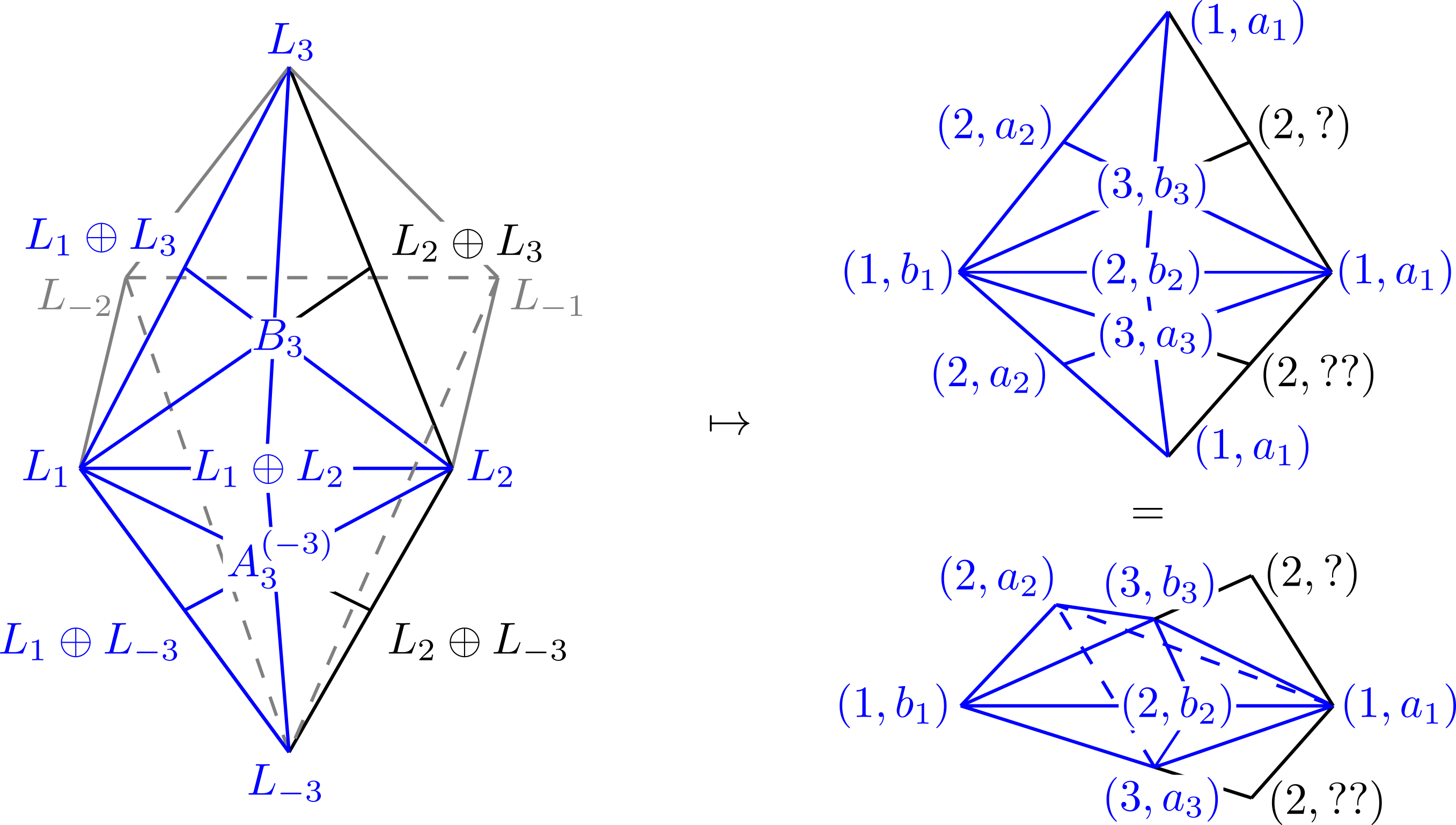}
\end{center} 
\caption{An apartment $\apartment_\mathbf{L}$ in $\buildingssymp(K^{6})$ such that $\psi_*([\apartment_\mathbf{L}]) = [\apartment_\mathbf{S}]$. The left shows $\apartment_\mathbf{L}$. Blue and black highlight $\st(B_3) \cup \st(A^{(-3)}_{3})$, where $B_3=L_{1}\oplus L_{2}\oplus L_{3}$ and $A^{(-3)}_{3}=L_{1}\oplus L_{2}\oplus L_{-3}$; the simplicial structure in the remaining grey part is omitted.
The top right shows the images of these simplices without identifying faces that are the same. The bottom right shows the subcomplex of $X_{n}(\hcl)$ after making these identifications. It is given by the 2-sphere $\apartment_\mathbf{S}$ (in blue) together with two triangular ``fins'' that are homologically trivial (in black).}
\label{fig_motivation}
\end{figure}

Before giving the actual construction of the symplectic frame $\mathbf{L}$ in \cref{sec_definition_L}, we explain the idea for the case $n=3$, see also \cref{fig_motivation}.

Each chamber of $\apartment_{\mathbf{L}}$ belongs to the star of $L_{\pm 1}\oplus L_{\pm 2}\oplus L_{\pm 3}$ for some unique choice of signs of the indices.
Our strategy is to construct $\mathbf{L}$ in such a way that
\begin{enumerate}
    \item\label{map-to-A_{S}} $\psi_{*}([\apartment_{\mathbf{L}}]|_{\text{star}(L_{1} \oplus L_{2} \oplus L_{3}})+[\apartment_{\mathbf{L}}]|_{\text{star}(L_{1} \oplus L_{2} \oplus L_{-3}}))=[\apartment_{\mathbf{S}}]$ and
    \item\label{other-stars-cancel} $\psi_{*}([\apartment_{\mathbf{L}}]|_{\text{star}(L_{j} \oplus L_{k} \oplus L_{l}}))=0$ 
    for all other $j=\pm 1, k=\pm 2, l=\pm 3$.
\end{enumerate}

To obtain \cref{map-to-A_{S}}, we first construct isotropic subspaces $B_{3}, \, A^{(-3)}_{3} \subset K^{6}$ of maximal dimension with $[B_{3}]=b_{3}$ and $[A^{(-3)}_{3}]=a_{3}$. We then define a ``partial symplectic frame'' $\mathbf{L}'=\{L_{1}, L_{2}, L_{3}, L_{-3} \}$ such that $B_{3}=L_{1}\oplus L_{2}\oplus L_{3}$, $A^{(-3)}_{3}=L_{1} \oplus L_{2} \oplus L_{-3}$, $[L_{1}]=b_{1}$ and  $[L_{j}]=a_{1}$ for $j\neq 1$.
This is done in such a way that we can pick $[L_{j}\oplus L_{k}]$ however we want for ``sufficiently many'' possible combinations of $L_{j}, L_{k}\in \mathbf{L}'$ (namely the ones highlighted in blue in \cref{fig_motivation}). The procedure involves repeatedly applying \cref{lem_relative_extension_of_summand} and \cref{chain of istropics}, which allow us to construct nested isotropic subspaces with prescribed class group elements. 
If we set $[L_{1}\oplus L_{2}]$ to be $b_{2}$ and $[L_{1}\oplus L_{3}]$ and $[L_{1} \oplus L_{-3}]$ to be $a_2$, then as shown in \cref{fig_motivation}, $\psi_{*}$ sends the chambers arising from this partial symplectic frame to $[\apartment_{\mathbf{S}}]$. 

Completing $\mathbf{L'}$ to a symplectic frame that satisfies \cref{other-stars-cancel} is more delicate. 
The main difficulty is that $\mathbf{L'} = \ls L_1, L_2, L_3, L_{-1} \rs$ puts restrictions on possible values of $L_{-2}$ and $L_{-1}$. This makes it harder to control the class group elements of the remaining vertices of $\apartment_\mathbf{L}$. While we can assure that $[L_{-2}]=[L_{-1}]=a_1$, we cannot control the class group elements of most of the other vertices. What we can assure though is that the chambers with such vertices (which we call ``bad'' below) occur in pairs that cancel each other out in homology. This is similar to the situation of the four black chambers in \cref{fig_motivation} and explained in \cref{sec_image_bad_simplices}.

\subsubsection{Defining $\mathbf{L}$}
\label{sec_definition_L}
We first define $L_1, \ldots, L_n$ and then $L_{-n}, \ldots, L_{-1}$.
\begin{lemma}
We may choose isotropic subspaces $B_{i}$ for $1\leq i\leq n$ and $A^{(j)}_{i}$ for $1\leq i< j\leq n$ such that
    \begin{align*}
        0=B_{0}\subset B_{1}\subset \cdots\subset B_{n}, &\quad \dim(B_{i})=i, [B_{i}]=b_{i},\\
        A^{(j)}_{1}\subset A^{(j)}_{2}\subset \cdots \subset A^{(j)}_{j-1} \subset B_{j}, &\quad \dim(A^{(j)}_{i})=i,\, [A^{(j)}_{i}]=a_{i}\text{ and } B_{i-1}\subset A^{(j)}_{i}
        .
    \end{align*}
\end{lemma}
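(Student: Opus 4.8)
The plan is to construct the modules $B_i$ and $A^{(j)}_i$ by a double induction, using \cref{chain of istropics} and \cref{lem_relative_extension_of_summand} as the basic building blocks.

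First I would build the chain $0 = B_0 \subset B_1 \subset \cdots \subset B_n$ of isotropic summands with $\rk(B_i) = i$ and $[B_i] = b_i$ directly by applying \cref{chain of istropics} to the tuple $(b_1, \ldots, b_n) \in \class(R)^n$. This settles the first line of the claim with no further work.

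Next, for each fixed $j$ with $1 \leq j \leq n$, I would construct the chain $A^{(j)}_1 \subset \cdots \subset A^{(j)}_{j-1} \subset B_j$ by a downward-then-upward argument, or more cleanly by an induction on $i$ that interleaves the $A^{(j)}_i$ with the $B_i$. Concretely, I want a chain
\begin{equation*}
	B_0 \subset A^{(j)}_1 \subset B_1 \subset A^{(j)}_2 \subset B_2 \subset \cdots \subset B_{j-2} \subset A^{(j)}_{j-1} \subset B_{j-1} \subset B_j
\end{equation*}
of isotropic summands, where $\rk(A^{(j)}_i) = i$ and $[A^{(j)}_i] = a_i$; note this automatically gives $B_{i-1} \subset A^{(j)}_i \subset B_j$ as required. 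To produce $A^{(j)}_i$ from the pair $B_{i-1} \subset B_i$, I would like to apply \cref{lem_relative_extension_of_summand}, but that lemma requires a rank gap of at least $2$, whereas $\rk(B_i) - \rk(B_{i-1}) = 1$. So instead I would apply \cref{lem_relative_extension_of_summand} to the pair $B_{i-1} \subset B_{i+1}$ (or $B_{i-1} \subset B_j$, using that $B_{i+1} \subseteq B_j \subseteq B_{i-1}^\perp$ since all the $B_k$ are isotropic and nested, so each is contained in the orthogonal complement of any smaller one): this has rank gap $\geq 2$ and yields an isotropic summand $A^{(j)}_i$ with $\rk(A^{(j)}_i) = i$, $[A^{(j)}_i] = a_i$ and $B_{i-1} \subset A^{(j)}_i \subset B_{i+1}$. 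This does not immediately give $A^{(j)}_i \subset B_i$, however, which is the subtle point — simply having $A^{(j)}_i$ sit between $B_{i-1}$ and $B_{i+1}$ of one rank higher than $B_{i-1}$ need not place it inside $B_i$.

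The hard part will therefore be threading the $A^{(j)}_i$ so that simultaneously $B_{i-1} \subset A^{(j)}_i \subset B_i$ and $A^{(j)}_i \subset A^{(j)}_{i+1}$, with the prescribed rank and class. The cleanest fix I anticipate is to reverse the order of construction: first choose $A^{(j)}_{j-1}$ inside $B_j$ with $B_{j-2} \subset A^{(j)}_{j-1} \subset B_j$ via \cref{lem_relative_extension_of_summand} applied to $B_{j-2} \subset B_j$ (but this still has the same "does it land in $B_{j-1}$" issue). A genuinely safe approach is to build the whole flag $A^{(j)}_1 \subset \cdots \subset A^{(j)}_{j-1}$ inside $B_{j-1}$ first, using \cref{chain of istropics} applied to the symplectic module structure on an appropriate summand — more precisely, one picks a complementary rank-$1$ summand to make room and applies \cref{lem_relative_extension_of_summand} repeatedly within $B_{j-1}$ to get $A^{(j)}_i$ with $A^{(j)}_{i-1} \subset A^{(j)}_i \subset B_{j-1}$, $\rk(A^{(j)}_i) = i$, $[A^{(j)}_i] = a_i$, but then one must still arrange $B_{i-1} \subset A^{(j)}_i$. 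I expect the authors handle this by choosing each $A^{(j)}_i$ as an extension of $B_{i-1}$ inside $B_{i+1} \cap (\text{something})$, so my proposal is: induct on $i$ from $1$ up to $j-1$, maintaining the invariant $A^{(j)}_{i-1} \subset B_{i-1}$, and at step $i$ apply \cref{lem_relative_extension_of_summand} to the pair $\sat{A^{(j)}_{i-1} + B_{i-1}} = B_{i-1} \subset B_{i+1}$, landing $A^{(j)}_i$ between them, then invoke a rank count (\cref{lem_rank_of_intersections}) to show $\rk(A^{(j)}_i \cap B_i) \geq i$ so in fact $A^{(j)}_i \subseteq B_i$ after possibly re-choosing the rank-$1$ extending summand inside $B_i \ominus B_{i-1}$ rather than $B_{i+1} \ominus B_{i-1}$. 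This last point — that the rank-$1$ summand $I$ chosen in the proof of \cref{lem_relative_extension_of_summand} can be taken inside the rank-$2$ summand $B_{i+1} \ominus B_{i-1}$ but with the right class, and that $B_i \ominus B_{i-1}$ is itself a rank-$1$ summand so we need a rank-$2$ ambient module — is exactly why one works with $B_{i+1}$ and not $B_i$, and the compatibility with $B_i$ is the one delicate verification I would spell out carefully.
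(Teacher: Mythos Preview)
You have misread the statement, and this misreading drives all the difficulties in your proposal. The lemma does \emph{not} require $A^{(j)}_i \subset B_i$; it only requires $B_{i-1} \subset A^{(j)}_i$, together with the chain $A^{(j)}_1 \subset \cdots \subset A^{(j)}_{j-1} \subset B_j$. Indeed, $A^{(j)}_i \subset B_i$ is impossible: both are rank-$i$ summands, so the containment would force $A^{(j)}_i = B_i$, contradicting $[A^{(j)}_i] = a_i \neq b_i = [B_i]$. Consequently the interleaved chain $B_0 \subset A^{(j)}_1 \subset B_1 \subset A^{(j)}_2 \subset \cdots$ you write down cannot exist, and the ``delicate verification'' you flag at the end is a non-issue. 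Your upward induction also fails to produce the required nesting $A^{(j)}_i \subset A^{(j)}_{i+1}$: from $A^{(j)}_i \subset B_{i+1}$ and $B_i \subset A^{(j)}_{i+1}$ alone there is no reason for $A^{(j)}_i$ to sit inside $A^{(j)}_{i+1}$.

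The paper's proof is exactly the downward induction you mention and then wrongly dismiss. After building the $B_i$ via \cref{chain of istropics}, one applies \cref{lem_relative_extension_of_summand} to $B_{j-2} \subset B_j$ to obtain $A^{(j)}_{j-1}$ with $B_{j-2} \subset A^{(j)}_{j-1} \subset B_j$ and $[A^{(j)}_{j-1}] = a_{j-1}$. Then, assuming $A^{(j)}_{i+1}$ has been built with $B_i \subset A^{(j)}_{i+1}$, one has $B_{i-1} \subset B_i \subset A^{(j)}_{i+1}$ with rank gap $(i+1)-(i-1)=2$, so \cref{lem_relative_extension_of_summand} applied to $B_{i-1} \subset A^{(j)}_{i+1}$ yields $A^{(j)}_i$ with $B_{i-1} \subset A^{(j)}_i \subset A^{(j)}_{i+1}$ and $[A^{(j)}_i] = a_i$. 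Both required containments come for free; nothing about $B_i$ needs to be checked.
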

\begin{proof}
We find the $B_i$ by \cref{chain of istropics}.
Given $0=B_{0}\subset B_{1} \subset \cdots\subset B_{j+1}$, we construct $A^{(j+1)}_{1}\subset A^{(j+1)}_{2}\subset \cdots \subset A^{(j+1)}_{i}$ as follows:

Applying \cref{lem_relative_extension_of_summand} to the pair $B_{j-1}\subset B_{j+1}$, we find an isotropic subspace $A^{(j+1)}_{j}$ of dimension $j$ such that $[A^{(j+1)}_{j}]=a_j$ and $B_{j-1}\subset A^{(j+1)}_{j} \subset B_{j+1}$.
Now assume that for some $1\leq i < j$, we have constructed $A^{(j+1)}_{i+1}\subset \cdots \subset A^{(j+1)}_{j}$. Then applying \cref{lem_relative_extension_of_summand} to the pair $B_{i-1} \subset A^{(j+1)}_{i+1}$, we obtain the isotropic subspace $A^{(j+1)}_{i}$ of dimension $i$ such that $[A^{(j+1)}_{i}]=a_i$ and $B_{i-1}\subset A^{(j+1)}_{i} \subset  A^{(j+1)}_{i+1}$.
\end{proof}

We define $L_1,\ldots, L_n$ by setting $
L_{1}\colonequals B_{1}$ and $L_{j}\colonequals A^{(j)}_{1}\text{ for }2\leq j\leq n$. We will show in \cref{sat of more frames} that $\{L_{1},\ldots, L_{i}\}$ is a frame of $B_{i}$ for all $i=1,\ldots, n$.

We now inductively construct $ L_{-n}, L_{-(n-1)},\ldots, L_{-1}$. Let $1 \leq j \leq n$ and assume that by induction, we have constructed lines $ L_{-n}, L_{-(n-1)},\ldots, L_{-(j+1)}$ such that 
\begin{equation}
\label{eq_Bk-1_in_L_j}
	B_{j-1}\subseteq L_{-k}^\perp \text{ for all } j+1\leq k\leq n.
\end{equation}
Observe that this condition is void if $j=n$, which is the induction beginning.

We want to obtain $L_{-j}$ from this. Let $W_{j+1}$ be the subspace
\begin{equation*}
	W_{j+1}\colonequals B_{j-1}^\perp \cap(\bigcap_{k=j+1,\ldots ,n}L^{\perp}_{k})\cap (\bigcap_{k=j+1,\ldots, n}L^{\perp}_{-k}).
\end{equation*}

\begin{lemma}
We may choose isotropic subspaces $A^{(-j)}_{i}$ for $1\leq i\leq j$ such that
    \begin{gather*}
 A^{(-j)}_{1}\subset A^{(-j)}_{2}\subset \cdots \subset A^{(-j)}_{j}\subset W_{j+1}, \\ \dim(A^{(-i)}_{i})=i,\, [A^{(-j)}_{i}]=a_{i}\text{ and } B_{i-1}\subset A^{(-j)}_{i}.
    \end{gather*}
\end{lemma}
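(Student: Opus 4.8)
## Proof Plan

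The statement to prove is essentially an iterated application of \cref{lem_relative_extension_of_summand} "from the top down", mirroring the construction of the $A^{(j+1)}_i$ in the previous lemma but now starting from the summand $W_{j+1}$. The plan is first to verify that $W_{j+1}$ is a genuine summand of $R^{2n}$ that is large enough to host the desired chain, and then to build the chain $A^{(-j)}_1 \subset \cdots \subset A^{(-j)}_j$ by descending induction on $i$, at each step applying \cref{lem_relative_extension_of_summand} to a pair $B_{i-1} \subset (\text{something})$ to produce $A^{(-j)}_i$ with the prescribed rank $i$, class $[A^{(-j)}_i] = a_i$, and nesting $B_{i-1} \subset A^{(-j)}_i$.

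First I would establish that $W_{j+1}$ is a summand of $R^{2n}$: it is a finite intersection of orthogonal complements of summands, and by \cref{lem_orthogonal_complement_direct_summand} each $B_{j-1}^\perp$, $I_k^\perp$, $I_{-k}^\perp$ is a summand, while the intersection of summands is again a summand (as recalled in \cref{sec_basics_rings_modules}). Next I would compute a lower bound on $\rk(W_{j+1})$ using \cref{lem_rank_of_intersections} repeatedly: $B_{j-1}^\perp$ has rank $2n-(j-1)$, and each of the $2(n-j)$ further intersections with a rank-$(2n-1)$ summand drops the rank by at most $1$, giving $\rk(W_{j+1}) \geq 2n - (j-1) - 2(n-j) = j+1$. (Here the inductive hypothesis \cref{eq_Bk-1_in_I_j}, i.e. $B_{j-1} \subseteq I_{-k}^\perp$, together with the analogous containments for the $I_k$, should be used to see that $B_{j-1} \subseteq W_{j+1}$, so in fact $\rk(W_{j+1}) \geq j+1$ can also be read off from the chain we are about to build plus headroom.) I would also check that $W_{j+1}$ is isotropic — or at least that $B_{j-1} \subseteq W_{j+1}$ and $W_{j+1} \subseteq B_{j-1}^\perp$ — so that the repeated invocation of \cref{lem_relative_extension_of_summand} is legitimate: each summand $A^{(-j)}_i$ produced will be isotropic and contained in $B_{i-1}^\perp$ because it sits inside $W_{j+1} \subseteq B_{j-1}^\perp \subseteq B_{i-1}^\perp$.

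Then comes the descending induction. For the base step $i=j$: apply \cref{lem_relative_extension_of_summand} to the pair $B_{j-1} \subset W_{j+1}$ — valid since $B_{j-1}$ is an isotropic summand, $W_{j+1} \subseteq B_{j-1}^\perp$, and $\rk(W_{j+1}) \geq j+1 = \rk(B_{j-1}) + 2$ — obtaining an isotropic summand $A^{(-j)}_j$ with $\rk(A^{(-j)}_j) = j$, $[A^{(-j)}_j] = a_j$, and $B_{j-1} \subset A^{(-j)}_j \subset W_{j+1}$. For the inductive step, given $A^{(-j)}_{i+1} \subset \cdots \subset A^{(-j)}_j$ with $1 \leq i < j$, apply \cref{lem_relative_extension_of_summand} to the pair $B_{i-1} \subset A^{(-j)}_{i+1}$: this is valid because $B_{i-1}$ is an isotropic summand, $A^{(-j)}_{i+1}$ is contained in $B_{j-1}^\perp \subseteq B_{i-1}^\perp$ (as $B_{i-1} \subseteq B_{j-1}$), and $\rk(A^{(-j)}_{i+1}) = i+1 = \rk(B_{i-1}) + 2$. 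This yields $A^{(-j)}_i$ isotropic of rank $i$ with $[A^{(-j)}_i] = a_i$ and $B_{i-1} \subset A^{(-j)}_i \subset A^{(-j)}_{i+1}$, completing the induction; chaining the inclusions gives $A^{(-j)}_1 \subset \cdots \subset A^{(-j)}_j \subset W_{j+1}$ as required.

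The main obstacle is the rank bound on $W_{j+1}$: one must be careful that the iterated application of \cref{lem_rank_of_intersections} really delivers $\rk(W_{j+1}) \geq j+1$ and not something smaller, and this in turn relies on the bookkeeping that there are exactly $2(n-j)$ auxiliary complements $I_{\pm k}^\perp$ being intersected in. A cleaner route — and the one I would actually write — is to observe directly that $B_{j-1} \subseteq W_{j+1}$ (using \cref{eq_Bk-1_in_I_j} and the symplectic-frame orthogonality relations among the previously chosen $I_\bullet$) and that $W_{j+1}$ is a summand strictly containing $B_{j-1}$ with enough room, so that the chain fits; the precise rank is then pinned down to $\geq j+1$ by \cref{lem_rank_of_intersections} as above. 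Everything else is a routine replay of the preceding lemma's proof with $B_{j+1}$ replaced by $W_{j+1}$.
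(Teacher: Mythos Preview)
Your proposal is correct and follows essentially the same route as the paper: bound $\rk(W_{j+1})\geq j+1$ via \cref{lem_orthogonal_complement_direct_summand} and \cref{lem_rank_of_intersections}, verify $B_{j-1}\subseteq W_{j+1}$ using \cref{eq_Bk-1_in_I_j} and the fact that $B_{j-1},I_k\subseteq B_n$, then apply \cref{lem_relative_extension_of_summand} first to $B_{j-1}\subset W_{j+1}$ and thereafter to $B_{i-1}\subset A^{(-j)}_{i+1}$ by descending induction. Your justification of $A^{(-j)}_{i+1}\subseteq B_{i-1}^\perp$ via containment in $W_{j+1}\subseteq B_{j-1}^\perp$ is slightly more explicit than the paper's (which leaves this implicit in the isotropy of $A^{(-j)}_{i+1}$), but the argument is the same.
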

\begin{proof}
By \cref{lem_orthogonal_complement_direct_summand}, $B_{j-1}^\perp$ has dimension $2n-j+1$ and $L^{\perp}_{\pm i}$ has dimension $2n-1$. Hence  $W_{j+1}$ has dimension at least $(2n-j+1)-2(n-j)=j+1$.
Both $B_{j-1}$ and $L_{k}$, for $k\in \ls j+1, \ldots, n \rs$ are contained in the isotropic subspace $B_n$, so $B_{j-1}\subseteq L_{k}^\perp$ for all such $k$. By assumption (\cref{eq_Bk-1_in_L_j}), we also have $B_{j-1}\subseteq L_{-k}^\perp$, so $B_{j-1}$ is contained in $W_{j+1}$.
Clearly, $W_{j+1}\subseteq B_{j-1}^\perp$. Hence, we can  apply \cref{lem_relative_extension_of_summand} to the pair $B_{j-1}\subset W_{j+1}$ to find an isotropic subspace $A^{(-j)}_{j}$ of dimension $j$ such that $[A^{(-j)}_{j}]=a_j$ and $B_{j-1}\subset A^{(-j)}_{j} \subset W_{j+1}$.
Now assume that for some $1\leq i <j$, we have constructed $A^{(-j)}_{i+1}\subset \cdots \subset A^{(-j)}_{j}$ with the desired properties. Then applying \cref{lem_relative_extension_of_summand} to the pair $B_{i-1} \subset A^{(-j)}_{i+1}$, we obtain the isotropic subspace $A^{(-j)}_{i}$ of dimension $i$ such that $[A^{(-j)}_{i}]=a_i$ and $B_{i-1}\subset A^{(-j)}_{i} \subset A^{(-j)}_{i+1}$.
\end{proof}

We define $L_{-j} \coloneqq A^{(-j)}_{1}$.

As $L_{-j}\subseteq W_{j+1} \subseteq B_{j-1}^\perp \subseteq B_{j-2}^\perp$, \cref{eq_Bk-1_in_L_j} also holds with $j+1$ replaced by $j$, i.e.
\begin{equation*}
	B_{j-2} \subseteq L_{-k}^\perp \text{ for all } j\leq k\leq n.
\end{equation*}
Hence we can continue and inductively define $L_{-n}, \ldots, L_{-1}$.

\subsubsection{Properties of $\mathbf{L}$}
Set $\mathbf{L} \coloneqq \ls L_1, \ldots, L_n, L_{-n}, \ldots, L_{-1}\rs$. We now show that $\mathbf{L}$ is indeed a symplectic frame of $\Kn$ and describe some of the vertices that are contained in the corresponding apartment. We begin with two auxiliary lemmas.

\begin{lemma}
\label{lem_intersection_A_k_B_k}
For all $j>0$, we have that $B_{i}\cap A^{(j)}_{i}= B_{i-1}$ for all $i <j$, and $B_{i}\cap A^{(-j)}_{i}= B_{i-1}$ for all $i \leq j$.
\end{lemma}
\begin{proof}
The inclusion $B_{i}\cap A^{(\pm j)}_{i} \supseteq B_{i-1}$ is clear. On the other hand, $B_{i}\cap A^{(\pm j)}_{i}$ is a subspace of $B_i$ and contains $B_{i-1}$, so it must have dimension $i-1$ or $i$. If it had dimension $i$, this would imply $B_{i}= A^{(\pm j)}_{i}$, which is impossible since $[B_{i}]=b_i\neq a_i= [A^{(\pm j)}_{i}]$. Hence, $\dim(B_{i}\cap A^{(\pm j)}_{i}) = i-1$. This implies the equality.
\end{proof}

\begin{lemma}
\label{L_not_in_B}
$L_j$ is not contained in $B_{j-1}$ and $L_{-j}$ is not contained in $B_j$.
\end{lemma}
\begin{proof}
We show that $L_{j}$ is not contained in $B_{i}$ for any $0\leq i < j$ and $L_{-j}$ is not contained in $B_{i}$ for any $0\leq i \leq j$. For $i=0$,
this is clear as $B_0 = \ls 0 \rs$.

Now assume that $i$ is in the corresponding range and $L_{\pm j}$ is not contained in $B_{i-1}$. Then $L_{\pm j}$ also cannot be contained in $B_{i}$: Suppose that it is. Then, since $L_{\pm j}=A^{(\pm j)}_{1}\subseteq A^{(\pm j)}_{i}$, we have $	L_{\pm j}\subseteq B_{i}\cap A^{(\pm j)}_{i} = B_{i-1}$,
where we used \cref{lem_intersection_A_k_B_k}. This is a contradiction.
\end{proof}

The following lemma describes some vertices of the apartment $\apartment_{\mathbf{L}}$.

\begin{lemma}\label{sat of some frames}
\label{sat of more frames}
Let $i\in \ls 1, \ldots, n \rs$.
\begin{enumerate}
\item \label{it_sat_Bi}$L_1 + L_2 +\ldots + L_i= B_i$.
\item \label{it_sat_Aij} $B_i + L_{j} = A^{(j)}_{i+1}$ for all $j \in \ls -(i+1), \pm (i+2), \ldots, \pm n \rs$.
\end{enumerate}
\end{lemma}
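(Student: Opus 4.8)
The plan is to prove both parts by the same two-step template: first establish the inclusion ``$\subseteq$'', which is immediate from the defining chains of the $B_i$ and $A^{(j)}_i$ together with the fact that direct summands are saturated, and then check that the two sides have equal rank, so that they are both direct summands contained in one another and hence coincide (\cref{iso btw summands and subspaces}). The only non-formal input is \cref{I_not_in_B}, which I will use each time to prevent the left-hand side from having rank one less than expected.

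For \cref{it_sat_Bi} I would argue by induction on $i$, the case $i=1$ being immediate since $I_1 = B_1$ is a summand and hence equals its own saturation. For the inductive step ($i\geq 2$), I would use that saturation does not change the $K$-span, so $\sat{U+V} = \sat{\sat{U}+V}$ for any submodules $U,V$ of $R^{2n}$; combined with the inductive hypothesis this gives
\[
	\sat{I_1 + \cdots + I_i} = \sat{\sat{I_1 + \cdots + I_{i-1}} + I_i} = \sat{B_{i-1} + I_i}.
\]
Now $B_{i-1} \subseteq B_i$, and $I_i = A^{(i)}_1 \subseteq A^{(i)}_{i-1} \subseteq B_i$, so $\sat{B_{i-1}+I_i} \subseteq B_i$; since it contains the rank-$(i-1)$ summand $B_{i-1}$ and sits inside the rank-$i$ summand $B_i$, its rank is $i-1$ or $i$. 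Rank $i-1$ would force $\sat{B_{i-1}+I_i} = B_{i-1}$ and hence $I_i \subseteq B_{i-1}$, contradicting \cref{I_not_in_B}. Hence the rank is $i$ and the two summands agree.

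For \cref{it_sat_Aij} I would fix $j$ in the stated range; in every such case $A^{(j)}_{i+1}$ is defined (for positive $j$ because $j\geq i+2$, for $j=-k$ because $k\geq i+1$). The defining property $B_{m-1}\subset A^{(j)}_m$ with $m=i+1$ gives $B_i \subset A^{(j)}_{i+1}$, and $I_j = A^{(j)}_1 \subseteq A^{(j)}_{i+1}$ because the chain $A^{(j)}_1\subset\cdots$ is increasing; hence $\sat{B_i + I_j}\subseteq A^{(j)}_{i+1}$. Conversely $\sat{B_i + I_j}$ has rank at most $\rk(B_i)+\rk(I_j)=i+1$ and contains $B_i$, so its rank is $i$ or $i+1$; rank $i$ would give $I_j\subseteq B_i$, contradicting \cref{I_not_in_B} since $B_i\subseteq B_{j-1}$ (when $j\geq i+2$), respectively $B_i\subseteq B_k$ (when $j=-k$ with $k\geq i+1$). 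So $\sat{B_i+I_j}$ is a rank-$(i+1)$ summand inside the rank-$(i+1)$ summand $A^{(j)}_{i+1}$, hence equal to it.

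I do not expect a genuine obstacle here; the only point requiring care is the bookkeeping of index ranges — in particular, that the positive index $j=i+1$ is correctly excluded from \cref{it_sat_Aij} (matching the fact that $A^{(i+1)}_{i+1}$ is not defined) while the negative index $j=-(i+1)$ is included, and that in each case the inclusion $B_i\subset A^{(j)}_{i+1}$ is literally one of the properties of the chains built in the two preceding lemmas.
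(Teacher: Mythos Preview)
Your proof is correct and follows essentially the same approach as the paper: establish the inclusion into $B_i$ (resp.\ $A^{(j)}_{i+1}$), then use a rank count together with \cref{I_not_in_B} to rule out the degenerate case. Your write-up is slightly more explicit than the paper's (e.g.\ spelling out $\sat{U+V}=\sat{\sat{U}+V}$ and the chain $B_i\subseteq B_{j-1}$ or $B_i\subseteq B_k$ needed to invoke \cref{I_not_in_B} in part~\ref{it_sat_Aij}), but the argument is the same.
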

\begin{proof}
We prove \cref{it_sat_Bi} by induction. The base case $i=1$ is trivial because $L_{1}=B_{1}$. Suppose the claim holds for $i-1$. We have that $L_{i}$ and $B_{i-1}$ are contained in $B_{i}$ by construction. Since $B_{i-1}=L_1 + L_2 +\ldots + L_{i-1}$ has dimension $i-1$, to prove the claim for $i$, it suffices to note that by \cref{L_not_in_B}, $L_{i}$ is not contained in $B_{i-1}$.
 
For \cref{it_sat_Aij}, note that $B_{i}\subset A^{(j)}_{i+1}$. Since $L_j$ is also in $A^{(j)}_{i+1}$ and the dimension of $B_i$ is $i$, either $B_i +  L_j  =A^{(j)}_{i+1}$ or $L_j$ is contained in $B_i$. The latter is impossible because of \cref{L_not_in_B}.
\end{proof}

Using the above, we can show that $\mathbf{L}$ is indeed a frame.

\begin{lemma}
\label{lem_orthogonality_L_negative}
$\mathbf{L}$ is a symplectic frame of $\Kn$.
\end{lemma}
\begin{proof}
We have to show that $L_j + L_k$ is isotropic if and only if $k\not=-j$. We will assume throughout that $k>0$ and $j>0$, and show that $L_{j}+ L_{k}$, $L_{-j}+ L_{-k}$, and $L_{-j}+ L_{k}$ (for $j\not= k$) are all isotropic.  For the first case ($L_{j}+ L_{k}$ is isotropic),  $L_1, \ldots, L_n$ are all contained in $B_n$. Since $B_n$ is isotropic, this implies that they are orthogonal to each other. For the second case ($L_{-j}+ L_{-k}$), without loss of generality, we can assume $j<k$. Since $L_{-j}\subseteq W_{j+1}\subseteq L_{-k}^\perp$, we have that $L_{-j}+ L_{-k}$ is isotropic.

For the third case ($L_{-j}+ L_{k}$), consider the subcase that $j<k$. Since $L_{-j}\subseteq W_{j+1}\subseteq L_{k}^\perp$, we have that $L_{-j}+ L_{-k}$ is isotropic for $j<k$. Consider the subcase $k<j$. Since $L_{-j}\subseteq W_{j+1}\subseteq B_{j-1}^\perp$ and $L_{k}\subseteq B_{j-1}$ for $k<j$, we have that $L_{-j}+ L_{-k}$ is isotropic for $k<j$.

Hence, we just need to show that $L_{j}+L_{-j}$ is not isotropic. For $1\leq j\leq n$, define
\begin{equation*}
	B_{2n-j+1} \coloneqq L_{1}+ \cdots + L_{n} + L_{-n}+ \cdots + L_{-j}.
\end{equation*}
We will inductively prove the following three things:
\begin{enumerate}
\item \label{it_partial_fram_symplectic_relations}$L_{j} + L_{-j}$ is not isotropic.
\item \label{it_partial_frame_rank}$\dim(B_{2n-j+1}) = 2n-j+1$.
\item \label{it_Bi_in_orthogonal_complements} $B_{2n-j+1}^\perp=B_{j-1}$.

\end{enumerate}

\paragraph{Claim 1:} If \cref{it_partial_frame_rank} holds for some $j$, then so does \cref{it_Bi_in_orthogonal_complements}.

By \cref{sat of some frames}, $B_{j-1} = L_1 + \ldots + L_{j-1}$. 
As $L_1, \ldots, L_{j-1}$ are orthogonal to $L_1, \ldots, L_n$ and $ L_{-n}, \ldots, L_{-j}$, we have $
	B_{2n-j+1}^\perp \supseteq B_{j-1}$.
Hence, it suffices to observe that $B_{2n-j+1}^{\perp}$ and $B_{j-1}$ have the same dimension:
using \cref{lem_orthogonal_complement_direct_summand} and \cref{it_partial_frame_rank}, we have
\begin{equation*}
	\dim(B_{2n-j+1}^{\perp}) = 2n-\dim(B_{2n-j+1}) = 2n-(2n-j+1) = j-1.
\end{equation*}
This implies Claim 1.

Now assume that \cref{it_partial_fram_symplectic_relations}, \cref{it_partial_frame_rank} and \cref{it_Bi_in_orthogonal_complements}
hold for $j+1$. 
In the induction beginning $j=n$, this is the case: \cref{it_partial_fram_symplectic_relations} is void because for $j=n$, there is no $L_{j+1}= L_{n+1}$;
\cref{it_partial_frame_rank} is satisfied because $L_{1}+\cdots + L_{n} = B_n$ by \cref{sat of some frames}; \cref{it_Bi_in_orthogonal_complements} follows from that by Claim 1.

We first show that \cref{it_partial_fram_symplectic_relations} holds for $j$, i.e.~$L_{j}+L_{-j}$ is not isotropic. Suppose that it was isotropic.
Then
\begin{align*}
	L_{-j}\subseteq B_{2n-(j+1)+1}^{\perp} = B_j,
\end{align*}
where the equality is \cref{it_Bi_in_orthogonal_complements} for $j+1$. This is a contradiction to \cref{L_not_in_B}.

From this, it follows that \cref{it_partial_frame_rank} holds for $j$, i.e. $\dim( B_{2n-j+1}) = 2n-j+1$:
By assumption, \cref{it_partial_frame_rank} holds for $j+1$, i.e. $\dim( B_{2n-(j+1)+1}) = 2n-j$.
Furthermore, $B_{2n-(j+1)+1}\subseteq L_{j}^\perp$, so
\begin{equation*}
	L_{-j} \not\subset B_{2n-(j+1)+1}
\end{equation*}
because \cref{it_partial_fram_symplectic_relations} holds for $j$, so $L_{-j}\not\subseteq L_{j}^\perp$.
\end{proof}

\subsection{Proof of \cref{prop_homology_surjectivity}, Step 2: Good and bad elements of $\signp$}
Let $S = \ls s_1, \ldots, s_n \rs$ be the set of simple reflections in $\signp$  that was defined in 
\cref{sec_signed_perm_group}.

\begin{definition}
\label{def_good_bad_elements}
We say that a permutation $\sigma\in \signp$ is \emph{good} if it is a subword of the Coxeter word $s_n\cdots s_1$, i.e.~if it can be written in the form
\begin{equation*}
	\sigma = s_n^{\epsilon_n}\cdots s_1^{\epsilon_1}
\end{equation*}
for some $\epsilon_i \in \ls 0,1 \rs$.
If this is not the case, we say that $\sigma$ is \emph{bad}.
\end{definition}

\begin{lemma}
\label{lem_descrip_of_good_elements}
For $\sigma \in \signp$, the following are equivalent:
\begin{enumerate}
\item \label{it_sigma_good} $\sigma$ is good.
\item \label{it_sigma_on_subsets} For all $2\leq i \leq n$, we have $[i-1]\subset \sigma([i])$.
\end{enumerate}
Furthermore, if these hold, $\sigma = s_n^{\epsilon_n}\cdots s_1^{\epsilon_1}$, then $\epsilon_i = 0$ if and only if $\sigma([i]) = [i]$.
\end{lemma}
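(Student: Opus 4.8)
The plan is to prove the two equivalences in turn, working with the window notation for $\signp$ and the combinatorial description of the simple reflections. First I would establish the implication \ref{it_sigma_good} $\Rightarrow$ \ref{it_sigma_on_subsets} together with the final sentence, simultaneously, by a direct analysis of the action of a word $s_1^{\epsilon_1}\cdots s_n^{\epsilon_n}$ on the sets $[i]$. The key observation is that $s_j$ only moves the symbols $j$, $j+1$ (and $-j$, in the case $j=n$), so it fixes every set $[i]$ with $i < j$ and with $i \geq j+1$ (since $s_j$ permutes $\{j,j+1\}$ or flips the sign of $n$, in either case preserving $[i]$ for $i \ge j+1$, resp. $i \ge j$ when $j = n$); the only set among the $[i]$ that $s_j$ can fail to preserve is $[j]$, which it sends to $[j-1]\cup\{j+1\}$ (for $j<n$) or to $[n-1]\cup\{-n\}$ (for $j=n$). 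Processing the product $\sigma = s_1^{\epsilon_1}\cdots s_n^{\epsilon_n}$ from left to right, one checks by induction on $i$ that $\sigma([i])$ equals $[i]$ if $\epsilon_i = 0$ and equals $[i-1]\cup\{i+1\}$ (or $[n-1]\cup\{-n\}$ if $i=n$) if $\epsilon_i=1$ — here one uses that the reflections $s_1,\dots,s_{i-1}$ applied afterwards fix both of these sets since each of them lies inside $[i]\cup\{i+1\}$ hence... wait, more carefully: after applying $s_1^{\epsilon_1}\cdots s_i^{\epsilon_i}$ to $[i]$ one gets the stated set, and the remaining letters $s_{i+1}^{\epsilon_{i+1}},\dots$ all fix it because they only move symbols with index $\ge i+1 > $ everything in the set except possibly $i+1$ itself, which $s_{i+1}$ moves to... hmm, $s_{i+1}$ swaps $i+1$ and $i+2$. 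So I need to be a little careful and instead argue: $s_j$ for $j > i$ preserves every set $S$ with $S \subseteq [i]$, because it only moves symbols of index $\ge j \ge i+1$, none of which lies in $[i]$; and it preserves $[i-1]\cup\{i+1\}$ only when $j \ne i+1$. The cleanest route is therefore to track the whole family $(\sigma([1]),\dots,\sigma([n]))$ at once: apply the letters in order $s_1^{\epsilon_1}, s_2^{\epsilon_2}, \dots$, and note that when we apply $s_i^{\epsilon_i}$, all sets $\sigma'([k])$ for $k\ne i$ (where $\sigma' = s_1^{\epsilon_1}\cdots s_{i-1}^{\epsilon_{i-1}}$) already have a known form that is preserved by $s_i$ and by all later letters. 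This gives $\sigma([i]) = [i]$ iff $\epsilon_i = 0$, which yields both \ref{it_sigma_on_subsets} (since $[i-1]$ is contained in both $[i]$ and $[i-1]\cup\{i+1\}$, resp. $[n-1]\cup\{-n\}$) and the final claim.

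For the converse \ref{it_sigma_on_subsets} $\Rightarrow$ \ref{it_sigma_good}, I would argue by induction on $n$, peeling off the top. Given $\sigma\in\signp$ satisfying $[i-1]\subset\sigma([i])$ for all $2\le i\le n$, apply the condition with $i=n$: $[n-1]\subset\sigma([n])$, and $\sigma([n])$ has $n$ elements, so $\sigma([n]) = [n]$ or $\sigma([n]) = [n-1]\cup\{-n\}$. Post-compose with $s_n^{\epsilon_n}$ (where $\epsilon_n$ is $0$ or $1$ accordingly) to arrange $\sigma'([n]) = [n]$ with $\sigma' = s_n^{\epsilon_n}\sigma$; since $s_n$ only moves $\pm n$, the conditions $[i-1]\subset\sigma'([i])$ still hold for $i<n$, and moreover $\sigma'$ now maps $[n]$ to itself hence restricts to an element of $\signp[n-1]$ (an element fixing $[n]$ setwise and commuting with negation must preserve the sign of $n$, hence fixes $n$, hence fixes $[n]\setminus\{n\}$... no: it preserves $[n-1]$ only as a set, but $[n-1]\subset\sigma'([n-1])$ forces $\sigma'([n-1]) = [n-1]$, so $\sigma'$ restricts to a signed permutation of $[\pm(n-1)]$). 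By induction $\sigma' = s_1^{\epsilon_1}\cdots s_{n-1}^{\epsilon_{n-1}}$ inside $\signp[n-1]\le\signp$, whence $\sigma = s_n^{\epsilon_n}\sigma' = s_n^{\epsilon_n}s_1^{\epsilon_1}\cdots s_{n-1}^{\epsilon_{n-1}}$; since $s_n$ commutes with $s_1,\dots,s_{n-2}$, this rearranges to $s_1^{\epsilon_1}\cdots s_{n-1}^{\epsilon_{n-1}}s_n^{\epsilon_n}$, so $\sigma$ is good. (The base case $n=1$ is trivial: $\signp[1]=\{1,s_1\}$ and both elements are good.)

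The main obstacle I anticipate is purely bookkeeping: being precise about \emph{which} sets among $\{[i]\}$ and their images are fixed by \emph{which} later simple reflections, since $s_i$ and $s_{i+1}$ interact (both touch the symbol $i+1$). The conceptually clean way to dodge this is the inductive peeling-off argument in the last paragraph for the converse, and for the forward direction to phrase the induction as "after applying $s_1^{\epsilon_1}\cdots s_i^{\epsilon_i}$, the set that was $[i]$ has reached its final image, and it is disjoint-enough from the active symbols of all later letters" — concretely, $\sigma([i])$ as computed is always contained in $[i+1]$, and the later letters $s_{i+1},\dots,s_n$ restricted to $[i+1]$ only ever move $i+1$ via $s_{i+1}$; checking the two cases $\sigma([i])=[i]$ (does not contain $i+1$, so fixed) and $\sigma([i])=[i-1]\cup\{i+1\}$ separately and using that in the latter case $\epsilon_i = 1$... here the subtlety is that $s_{i+1}$ could move the $i+1$ in $[i-1]\cup\{i+1\}$ to $i+2$. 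So in fact the forward direction is cleanest done by the \emph{same} peeling-off induction on $n$: given a good word, its last-but-top structure is an element of $\signp[n-1]$ times $s_n^{\epsilon_n}$, so $\sigma([n]) \in \{[n], [n-1]\cup\{-n\}\}$ directly, and by induction $[i-1]\subset\sigma([i])$ for $i<n$; I would present it this way to keep both directions uniform and avoid the $s_i/s_{i+1}$ overlap entirely.
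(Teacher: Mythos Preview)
Your converse argument (\ref{it_sigma_on_subsets} $\Rightarrow$ \ref{it_sigma_good}) has a genuine gap: you peel off $s_n$ from the top, but this is the wrong end of the type-$B_n$ Dynkin diagram. After arranging $\sigma'([n]) = [n]$, you claim $\sigma'$ restricts to an element of $\signp[n-1]$ via ``$[n-1]\subset\sigma'([n-1])$ forces $\sigma'([n-1]) = [n-1]$''. But condition~(\ref{it_sigma_on_subsets}) only gives $[n-2]\subset\sigma'([n-1])$, not $[n-1]\subset\sigma'([n-1])$; already $\sigma' = s_{n-1}$ has $\sigma'([n]) = [n]$ and satisfies (\ref{it_sigma_on_subsets}), yet $\sigma'(n) = n-1 \ne n$. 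What $\sigma'([n]) = [n]$ actually buys you is that $\sigma'$ lies in the parabolic $\langle s_1,\ldots,s_{n-1}\rangle$, which is the symmetric group $S_n$ (type $A_{n-1}$), \emph{not} a copy of $\signp[n-1]$---so your inductive hypothesis does not apply. A second, related problem: your rearrangement $s_n^{\epsilon_n}s_1^{\epsilon_1}\cdots s_{n-1}^{\epsilon_{n-1}} = s_1^{\epsilon_1}\cdots s_{n-1}^{\epsilon_{n-1}}s_n^{\epsilon_n}$ requires $s_n$ to commute with $s_{n-1}$, which is false in type $B_n$ (there $(s_{n-1}s_n)^4 = 1$).

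The paper avoids both issues by peeling from the \emph{bottom}. From $[1]\subset\sigma([2])$ one gets $\sigma^{-1}(1)\in\{1,2\}$, and after possibly left-multiplying by $s_1$ one reduces to the case $\sigma(1)=1$. Then $\sigma$ lies in the standard parabolic $\langle s_2,\ldots,s_n\rangle$, which \emph{is} isomorphic to $\signp[n-1]$ with its simple reflections already a subset of $S$; the induction therefore applies directly and the resulting word $s_2^{\epsilon_2}\cdots s_n^{\epsilon_n}$ is in the required form with no commutation needed. The underlying asymmetry is that deleting $s_1$ from the $B_n$ diagram yields $B_{n-1}$, whereas deleting $s_n$ yields $A_{n-1}$.
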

\begin{proof}
It is not hard to see that \cref{it_sigma_good} implies \cref{it_sigma_on_subsets}.

We prove that \cref{it_sigma_on_subsets} implies \cref{it_sigma_good} together with the description of $\epsilon_i$ by induction on $n$. If $n=1$, this is easy to check: $\signp[1]\cong \mbZ/2\mbZ$ only contains $s_1^0 = \on{id}$ and the element $s_1^1 = s_1$ that swaps $1$ with $-1$.

So assume that $n\geq 2$ and that the claim is true for $n-1$.
Let $\sigma$ satisfy \cref{it_sigma_on_subsets}.  
Then $\ls 1 \rs = [1]\subset \sigma([2])$, i.e.~$\sigma^{-1}(1)\in \ls 1, 2 \rs$.

If $\sigma^{-1}(1)=1$, then $\sigma$ restricts to an element in the group of signed permutations of the set $\ls 2, \ldots, n, -2, \ldots -n \rs$. This group is generated by the simple reflections $s_2, \ldots, s_n$ and is isomorphic to $\signp[n-1]$. Hence by induction, we have
\begin{equation*}
	\sigma = s_n^{\epsilon_n}\cdots s_2^{\epsilon_2} =  s_n^{\epsilon_n}\cdots s_2^{\epsilon_2}s_1^0,
\end{equation*}
where $\epsilon_i = 0 $ if and only if $\sigma(\ls 2, \ldots, i \rs) = \ls 2, \ldots, i\rs$. As we assumed $\sigma(1) = 1$, the latter is equivalent to saying that $\sigma([i]) = [i]$.

If $\sigma^{-1}(1)=2$, then $\sigma' \coloneqq  \sigma s_1$ still satisfies \cref{it_sigma_on_subsets} because $\sigma'([i]) = \sigma[i]$ for all $i \geq 2$ (this follows because $s_1([i]) = [i]$ for all $i\geq 2$). However, we also have $\sigma'^{-1}(1)=1$. Hence by the paragraph above,
\begin{equation*}
	\sigma' = s_n^{\epsilon_n}\cdots s_2^{\epsilon_2},
\end{equation*}
where $\epsilon_i = 0 $ if and only if $\sigma([i]) = [i]$,
i.e.
\begin{equation*}
\sigma = \sigma' s_1 = s_n^{\epsilon_n}\cdots s_2^{\epsilon_2} s_1^1
\end{equation*}
satisfies the claim.
\end{proof}

In particular, \cref{lem_descrip_of_good_elements} implies that the sequence $\mathbf{e} = \mathbf{e}_{\sigma}=(\epsilon_{1},\ldots,\epsilon_{n})$ is an invariant of a good permutation $\sigma$, so it does not depend on the word in $S$
we choose to represent $\sigma$.
It gives a 1-to-1 correspondence between such sequences (or equivalently elements in $\ls 0,1 \rs^n)$ and good elements in $\signp$.

Recall that the apartment class $[\apartment_{\mathbf{L}}]$ is given by
\begin{equation*}
	[\apartment_{\mathbf{L}}] \coloneqq \sum_{\sigma\in\signp} (-1)^{\len(\sigma)}\mathbf{L}_{\sigma}
\end{equation*}
with $\mathbf{L}_{\sigma}$ as in \cref{eq_L_sigma}.
    We define
\begin{equation*}
    [\apartment_{\mathbf{L}}]^{\on{good}}\colonequals \displaystyle\sum_{\sigma\in\signp \text{ good}} (-1)^{\len(\sigma)}\mathbf{L}_{\sigma} \text{ and } [\apartment_{\mathbf{L}}]^{\on{bad}}\colonequals \displaystyle\sum_{\sigma\in\signp \text{ bad}} (-1)^{\len(\sigma)}\mathbf{L}_{\sigma},
\end{equation*}
so that as a chain in $C_{n-2}(\buildingssymp(\Kn))$, we have $[\apartment_{\mathbf{L}}]=[\apartment_{\mathbf{L}}]^{\on{good}}+[\apartment_{\mathbf{L}}]^{\on{bad}}$. For $n=3$, this is depicted on the left of \cref{fig_motivation}, where the blue triangles are the chambers corresponding to good elements in $\signp$. 

We will show that $\psi_{*}([\apartment_{\mathbf{L}}]^{\on{good}})=[\apartment_{\mathbf{S}}]$ (\cref{im of good}) and $\psi_{*}([\apartment_{\mathbf{L}}]^{\on{bad}})= 0$ (\cref{im of bad}). These imply that $\psi_{*}([\apartment_{\mathbf{L}}])= [\apartment_{\mathbf{S}}]$, which finishes the proof of \cref{prop_homology_surjectivity}.

\subsubsection{The image of $[\apartment_{\mathbf{L}}]^{\on{good}}$}

\begin{lemma}
\label{lem_image_U_sigma}
Let $\sigma\in \signp$ be such that $[i-1]\subset \sigma([i])$ for all $2\leq i\leq n$. Then
\begin{equation*}
	L_{\sigma([i])} = 
	\begin{cases}
		B_i & ,\, \sigma([i]) = [i], \\
		A_{i}^{(j)} & ,\, \text{otherwise.}
	\end{cases}
\end{equation*} 
where in the second case, $j$ is some element in $\ls -i, \pm (i+1), \ldots, \pm n \rs$.
\end{lemma}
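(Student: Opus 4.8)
The plan is to use the hypothesis $[i-1]\subset\sigma([i])$ to pin down the set $\sigma([i])$ up to a single element, and then to read off $I_{\sigma([i])}$ from \cref{sat of some frames}.

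First I would record the relevant combinatorics of signed permutations. Since $\sigma\in\signp$ is a bijection of $[\pm n]$, the set $\sigma([i])$ has exactly $i$ elements and contains no pair $\{m,-m\}$: if $m=\sigma(\ell)$ with $\ell\in[i]$, then $-m=\sigma(-\ell)$ with $-\ell\notin[i]$, so $-m\notin\sigma([i])$. Together with $[i-1]\subseteq\sigma([i])$, this forces $\sigma([i])=[i-1]\cup\{j\}$ for a unique index $j$ lying outside $\{\pm1,\ldots,\pm(i-1)\}$, hence $j\in\{\pm i,\pm(i+1),\ldots,\pm n\}$. Unwinding the definition in \eqref{eq_I_sigma}, $I_{\sigma([i])}=\sat{I_1+\cdots+I_{i-1}+I_j}$; and since the saturation of a submodule of $R^{2n}$ depends only on the $K$-subspace it spans (\cref{iso btw summands and subspaces} together with \eqref{eq_direct_summands_intersection}), while $B_{i-1}=\sat{I_1+\cdots+I_{i-1}}$ spans $(I_1+\cdots+I_{i-1})\otimes_R K$ by \cref{sat of some frames}, we may rewrite $I_{\sigma([i])}=\sat{B_{i-1}+I_j}$.

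Now I would split into two cases according to the value of $j$. If $j=i$, then $\sigma([i])=[i]$ and $I_{\sigma([i])}=\sat{I_1+\cdots+I_i}=B_i$ by \cref{sat of some frames}. If $j\neq i$, then by the first step the index $j$ lies in $\{-i,\pm(i+1),\ldots,\pm n\}$, which is exactly the range of superscripts occurring in \cref{sat of some frames} once the rank index there is shifted down by one; hence the module $A^{(j)}_i$ is defined and $I_{\sigma([i])}=\sat{B_{i-1}+I_j}=A^{(j)}_i$, as required. I expect the only place requiring any care is the harmless bookkeeping step of replacing the partial sum $I_1+\cdots+I_{i-1}$ by its saturation $B_{i-1}$ before applying the outer saturation — which is immediate because $\sat{-}$ factors through $-\otimes_R K$ — together with verifying that the index $j$ always falls within the range for which the modules $B_i$ and $A^{(j)}_i$ have been defined. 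There is no substantial difficulty beyond this.
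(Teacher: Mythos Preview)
Your proof is correct and follows essentially the same route as the paper's: both reduce $I_{\sigma([i])}$ to $\sat{B_{i-1}+I_j}$ for the unique $j\in\sigma([i])\setminus[i-1]$ and then invoke \cref{sat of some frames}. You are simply more explicit than the paper about why $\sigma([i])$ consists of $[i-1]$ plus a single element with $|j|\geq i$, and about why one may replace $I_1+\cdots+I_{i-1}$ by its saturation $B_{i-1}$ inside the outer $\sat{-}$; the paper asserts both of these without comment.
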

\begin{proof}
As $[i-1]\subset \sigma([i])$, we have 
\begin{equation*}
	L_{\sigma([i])} = L_1 \oplus\ldots \oplus L_{i-1} \oplus L_{\sigma([i]) \setminus [i-1]}  = B_{i-1} \oplus L_{\sigma([i]) \setminus [i-1]}.
\end{equation*}
We have $L_{\sigma([i]) \setminus [i-1]} = L_i$ if and only if $\sigma([i]) = [i]$. Hence, the claim follows from  \cref{sat of some frames}.
\end{proof}

Recall that the maximal simplices of $X_n(\class(R))$ are of the form $C_{\mathbf{e}} = (c_1^{\mathbf{e}}, \ldots, c_n^{\mathbf{e}})$ as defined in \cref{sec_defn_X_n}.

\begin{lemma}
\label{lem_image_of_good_chamber}
Let $\mathbf{e} = (\epsilon_1, \ldots, \epsilon_n)\in \ls 0,1 \rs^n$ and $\sigma = s_n^{\epsilon_n}\cdots s_1^{\epsilon_1}$.
Then $\psi(\mathbf{L}_\sigma) = C_\mathbf{e}$.
\end{lemma}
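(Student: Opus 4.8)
The plan is to unwind both sides of the claimed equality and match them termwise using the combinatorial description of good permutations from \cref{lem_descrip_of_good_elements} together with the identification of vertices from \cref{lem_image_U_sigma}. Recall that $\psi(\mathbf{I}_\sigma)$ is by definition the chain $\big(\rk(I_{\sigma([i])}),\, [I_{\sigma([i])}]\big)_{i=1}^{n}$ in $X_n(\class(R))$, and that $C_\mathbf{e} = (c_1^\mathbf{e}, \ldots, c_n^\mathbf{e})$ with $c_i^\mathbf{e} = b_i$ when $\epsilon_i = 0$ and $c_i^\mathbf{e} = a_i$ when $\epsilon_i = 1$. Since $\sigma([i])$ always has exactly $i$ elements, we have $\rk(I_{\sigma([i])}) = i$ for every $i$, so it remains only to check that $[I_{\sigma([i])}] = c_i^\mathbf{e}$ for all $i$.

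First I would invoke \cref{lem_descrip_of_good_elements}: because $\sigma = s_1^{\epsilon_1}\cdots s_n^{\epsilon_n}$ is good, we have $[i-1]\subset \sigma([i])$ for all $2\le i\le n$, so \cref{lem_image_U_sigma} applies and yields that $I_{\sigma([i])}$ is either $B_i$ (when $\sigma([i]) = [i]$) or some $A_i^{(j)}$ with $j\in\ls -(i+1),\pm(i+2),\ldots,\pm n\rs$ (otherwise). The same lemma also tells us that $\epsilon_i = 0$ precisely when $\sigma([i]) = [i]$; this is the key link between the exponent sequence $\mathbf{e}$ and which of the two cases occurs. (Here one should note that $\mathbf{e}_\sigma$ is a genuine invariant of $\sigma$, which is again part of \cref{lem_descrip_of_good_elements}.)

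It then suffices to read off the class-group elements from the construction of $\mathbf{I}$. If $\epsilon_i = 0$, then $I_{\sigma([i])} = B_i$, and by construction $[B_i] = b_i = c_i^\mathbf{e}$. If $\epsilon_i = 1$, then $I_{\sigma([i])} = A_i^{(j)}$ for some admissible $j$, and in all cases the summands $A_i^{(j)}$ (and $A_i^{(-j)}$) were chosen with $[A_i^{(j)}] = a_i = c_i^\mathbf{e}$. Combining these two cases gives $\psi(I_{\sigma([i])}) = (i, c_i^\mathbf{e})$ for every $i$, hence $\psi(\mathbf{I}_\sigma) = (c_1^\mathbf{e},\ldots,c_n^\mathbf{e}) = C_\mathbf{e}$, as desired. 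There is no real obstacle here: all the substance is already contained in \cref{lem_image_U_sigma}, \cref{lem_descrip_of_good_elements}, and the defining properties of the summands $B_i$ and $A_i^{(\pm j)}$; the only point requiring a little care is keeping straight that the ``$\epsilon_i = 0$'' case corresponds exactly to the vertex $B_i$ and thus to $b_i$, and everything else to an $A$-type summand and thus to $a_i$.
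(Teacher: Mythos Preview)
Your argument is correct and matches the paper's proof essentially line for line: both invoke \cref{lem_descrip_of_good_elements} to obtain $[i-1]\subset\sigma([i])$ and the equivalence $\epsilon_i=0\Leftrightarrow\sigma([i])=[i]$, then apply \cref{lem_image_U_sigma} to identify $I_{\sigma([i])}$ as $B_i$ or some $A_i^{(j)}$ and read off the class-group element as $b_i$ or $a_i$ accordingly.
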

\begin{proof}
The image under $\psi$ of $\mathbf{L}_{\sigma}=  L_{\sigma([1])} \subset \ldots \subset L_{\sigma([n])}$
is given by the simplex
\begin{equation*}
	([L_{\sigma([1])}], \ldots, [L_{\sigma([n])}]).
\end{equation*}
By \cref{lem_descrip_of_good_elements}, for all $2\leq i \leq n$, we have $\sigma([i]) \supset [i-1]$ and $\epsilon_i = 0$ if and only if $\sigma([i]) = [i]$.
Hence by \cref{lem_image_U_sigma}, 
\begin{equation*}
	[L_{\sigma([i])}] = \begin{cases}
		b_i & \epsilon_i=0,\\
		a_i & \epsilon_i = 1,		
	\end{cases}
\end{equation*}
which is what we needed to show.
\end{proof}

\begin{lemma}\label{im of good}
    We have $\psi_{*}([\apartment_{\mathbf{L}}]^{\on{good}})=[\apartment_{\mathbf{S}}]$.
\end{lemma}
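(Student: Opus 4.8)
The plan is to unwind both sides as explicit chains and match them term by term. First I would invoke \cref{lem_descrip_of_good_elements}: the assignment $\mathbf{e} = (\epsilon_1,\ldots,\epsilon_n) \mapsto \sigma_{\mathbf{e}} \coloneqq s_1^{\epsilon_1}\cdots s_n^{\epsilon_n}$ is a bijection from $\ls 0,1\rs^n$ onto the set of good elements of $\signp$, so that
\begin{equation*}
	[\apartment_{\mathbf{I}}]^{\on{good}} = \sum_{\mathbf{e}\in \ls 0,1\rs^n} (-1)^{\len(\sigma_{\mathbf{e}})}\,\mathbf{I}_{\sigma_{\mathbf{e}}}.
\end{equation*}
Next, since $\psi$ is a poset map it induces a simplicial map on order complexes and hence a chain map on simplicial chains; by \cref{lem_image_of_good_chamber} it carries the chamber $\mathbf{I}_{\sigma_{\mathbf{e}}} = I_{\sigma_{\mathbf{e}}([1])}\subset\cdots\subset I_{\sigma_{\mathbf{e}}([n])}$ to the maximal simplex $C_{\mathbf{e}}$ of $X_n(\class(R))$. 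Because $\rk(I_{\sigma_{\mathbf{e}}([i])}) = i$, the vertices of $\psi(\mathbf{I}_{\sigma_{\mathbf{e}}})$ have distinct first coordinates, so this identification is order-preserving and injective on vertices; no degeneration occurs and $\psi_*(\mathbf{I}_{\sigma_{\mathbf{e}}}) = C_{\mathbf{e}}$ on the nose (with sign $+1$). Hence $\psi_*([\apartment_{\mathbf{I}}]^{\on{good}}) = \sum_{\mathbf{e}}(-1)^{\len(\sigma_{\mathbf{e}})}C_{\mathbf{e}}$, which I then want to recognize as $[\apartment_{\mathbf{S}}] = \sum_{\mathbf{e}}(-1)^{\sum\epsilon_i}C_{\mathbf{e}}$.

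The only remaining point — the one requiring a genuine (but tiny) argument rather than bookkeeping — is the sign match $(-1)^{\len(\sigma_{\mathbf{e}})} = (-1)^{\sum_i \epsilon_i}$. I would deduce this from the fact that the parity of word length is a well-defined homomorphism $\signp \to \mbZ/2\mbZ$ (the sign character of the Coxeter group of type $\mathtt{C}_n$): any two words representing the same element have the same number of letters modulo $2$, and $s_1^{\epsilon_1}\cdots s_n^{\epsilon_n}$ has $\sum_i \epsilon_i$ letters, so $\len(\sigma_{\mathbf{e}}) \equiv \sum_i \epsilon_i \pmod 2$. (One could also check that this word is reduced, so that equality holds on the nose, but only the parity is needed.) Plugging this into the previous display yields $\psi_*([\apartment_{\mathbf{I}}]^{\on{good}}) = \sum_{\mathbf{e}}(-1)^{\sum\epsilon_i}C_{\mathbf{e}} = [\apartment_{\mathbf{S}}]$.

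I do not expect a real obstacle: the statement is essentially a reassembly of \cref{lem_descrip_of_good_elements} and \cref{lem_image_of_good_chamber} plus the parity observation. The only places where care is warranted are (i) confirming that $\psi$, as a chain map, sends each good chamber to $+C_{\mathbf{e}}$ without collapsing vertices — which is why the rank/order-preservation remark is made — and (ii) making sure the two indexing sets, namely $\ls 0,1\rs^n$ on the $X_n$ side and the good elements of $\signp$ on the building side, are identified by exactly the bijection $\mathbf{e}\mapsto\sigma_{\mathbf{e}}$ furnished by \cref{lem_descrip_of_good_elements}, so that the summands line up.
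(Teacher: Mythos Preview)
Your proposal is correct and follows essentially the same route as the paper: apply \cref{lem_image_of_good_chamber} to send each good chamber $\mathbf{I}_{\sigma_{\mathbf{e}}}$ to $C_{\mathbf{e}}$, then match signs via $\len(\sigma_{\mathbf{e}})\equiv\sum_i\epsilon_i\pmod 2$. The paper is terser (it asserts the ``obvious equality'' $\len(\sigma)=\sum\epsilon_i$, which does hold since the word is reduced), while you add the harmless extra remarks on non-degeneracy and parity; these are just elaborations, not a different argument.
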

\begin{proof}
Let $\sigma = s_n^{\epsilon_n}\cdots s_1^{\epsilon_1} \in \signp$
be a good element. By \cref{lem_image_of_good_chamber}, we have $\psi(\mathbf{L}_\sigma) = C_\mathbf{e}$.
Furthermore, there is an obvious equality $\len(\sigma) = \sum \epsilon_{i}$.
Hence,
\begin{equation*}
	\psi_{*}([\apartment_{\mathbf{L}}]^{\on{good}}) = 
	\sum_{\sigma\in\signp \text{ good}} (-1)^{\len(\sigma)}\mathbf{L}_{\sigma}
	 = \sum_{\mathbf{e}\in \ls 0,1 \rs^n} (-1)^{\sum \epsilon_{i}}C_\mathbf{e} = [\apartment_{\mathbf{S}}].
	 \qedhere
\end{equation*}
\end{proof}

\subsubsection{The image of $[\apartment_{\mathbf{L}}]^{\on{bad}}$}
\label{sec_image_bad_simplices}
Suppose that $\sigma$ is a bad element of $\signp$. 
Then by \cref{lem_descrip_of_good_elements}, there is some $2\leq i \leq n$ such that $[i-1]\not\subset \sigma([i])$.
Let $k_{\sigma}$ be the smallest such integer.

As $k_{\sigma}$ is minimal, we have $[k_\sigma -2] \subset \sigma([k_\sigma-1])$ but $\sigma([k_\sigma-1])\neq [k_\sigma-1]$ and $\sigma(k_{\sigma})\neq k_{\sigma}-1$.
Define $j_\sigma$ to be the unique element in $\sigma([k_\sigma-1])\setminus [k_\sigma -2] $. We have $j_\sigma \neq k_{\sigma}-1$, so 
\begin{equation}
\label{eq_j_sigma}
	j_\sigma \not \in [k_\sigma-1].
\end{equation}

Let $\tau_\sigma\in \signp$ be the element that swaps $j_{\sigma}$ and $\sigma(k_{\sigma})$,
\begin{equation*}
	\tau_\sigma(\pm j_{\sigma}) = \pm \sigma(k_{\sigma}), \, \tau_\sigma(\pm \sigma(k_{\sigma}))=\pm j_{\sigma},\, \tau_\sigma(i) = i \text{ for } i\not\in \ls \pm j_{\sigma}, \pm \sigma(k_{\sigma}) \rs.
\end{equation*}
As $j_\sigma\in \sigma([k_\sigma-1])$, we have $j_\sigma\not =  \sigma(k_\sigma)$.  Hence, $\tau_\sigma$ is a non-trivial element. It is a reflection, i.e.~a conjugate of an element in $S$ (see \cite[Proposition 8.1.5]{BB:CombinatoricsCoxetergroups}). In particular, it has odd word length, so
\begin{equation}
\label{eq_sign_tau_sigma}
(-1)^{\len(\tau_\sigma)} = -1.
\end{equation}

We define a map $\iota$ on the set of bad elements of $\signp$ by
\begin{align*}
\iota(\sigma)\coloneqq \tau_\sigma \sigma.
\end{align*}
In other words, given a bad element $\sigma$, the element $\iota(\sigma)\in \signp$ is obtained by swapping $j_\sigma$ with $\sigma(k_\sigma)$ in the image, i.e.
    \[
    \iota(\sigma)(i)=  
    \begin{cases}
        \sigma(i) & \text{ if } i\neq \sigma^{-1}(j_\sigma ),  k_\sigma,\\
        \sigma(k_{\sigma}) & \text{ if } i=\sigma^{-1}(j_\sigma ),\\
        j_\sigma & \text{ if } i=k_{\sigma }.\\
    \end{cases}
\]

\begin{lemma} 
The map $\iota$ defines an involution on the set of bad elements of $\signp$.
\end{lemma}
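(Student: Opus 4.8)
The plan is to show that $\iota$ is a well-defined map from bad elements to bad elements and that $\iota \circ \iota = \on{id}$. First I would check that $\iota(\sigma)$ is again bad with the same associated data. The key observation is that $\iota(\sigma)$ agrees with $\sigma$ on $[k_\sigma - 1]$: indeed, $\tau_\sigma$ only moves $\pm j_\sigma$ and $\pm \sigma(k_\sigma)$, and by \cref{eq_j_sigma} we have $j_\sigma \notin [k_\sigma - 1]$, while $\sigma(k_\sigma) \notin \sigma([k_\sigma - 1]) \supseteq$ (the relevant images), so $\sigma^{-1}(\sigma(k_\sigma)) = k_\sigma \notin [k_\sigma-1]$; hence $\iota(\sigma)(i) = \sigma(i)$ for all $i \in [k_\sigma - 1]$. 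Consequently $\iota(\sigma)([i]) = \sigma([i])$ for all $i \leq k_\sigma - 1$, so $[i-1] \subset \iota(\sigma)([i])$ holds for all $i \leq k_\sigma - 1$ but fails at $i = k_\sigma$ (since $\iota(\sigma)([k_\sigma]) = (\sigma([k_\sigma-1]) \setminus \{j_\sigma\}) \cup \{\sigma(k_\sigma), j_\sigma\} \setminus \ldots$ — more simply, $\iota(\sigma)([k_\sigma])$ is obtained from $\sigma([k_\sigma])$ by swapping $j_\sigma$ and $\sigma(k_\sigma)$, and since $j_\sigma \in \sigma([k_\sigma-1]) \subset \sigma([k_\sigma])$ and $\sigma(k_\sigma) \in \sigma([k_\sigma])$, actually $\iota(\sigma)([k_\sigma]) = \sigma([k_\sigma])$). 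Thus $[k_\sigma - 2] \subset \iota(\sigma)([k_\sigma-1]) = \sigma([k_\sigma-1]) \neq [k_\sigma-1]$, so $\iota(\sigma)$ is bad with $k_{\iota(\sigma)} = k_\sigma$.

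Next I would identify $j_{\iota(\sigma)}$. Since $\iota(\sigma)([k_\sigma-1]) = \sigma([k_\sigma - 1])$, the unique element of $\iota(\sigma)([k_\sigma-1]) \setminus [k_\sigma - 2]$ is again $j_\sigma$, i.e. $j_{\iota(\sigma)} = j_\sigma$. Now I need $\iota(\sigma)(k_{\iota(\sigma)}) = \iota(\sigma)(k_\sigma)$: by definition of $\iota$, $\iota(\sigma)(k_\sigma) = j_\sigma$. Therefore $\tau_{\iota(\sigma)}$ is the transposition swapping $j_{\iota(\sigma)} = j_\sigma$ with $\iota(\sigma)(k_{\iota(\sigma)}) = j_\sigma$... wait — that would be trivial. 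The point is that $\tau_{\iota(\sigma)}$ swaps $j_\sigma$ with $\iota(\sigma)(k_\sigma)$; but here one must be careful because the roles have changed. Let me recompute: for $\iota(\sigma)$, we have $j_{\iota(\sigma)} = j_\sigma$ and $\iota(\sigma)(k_\sigma) = j_\sigma$, so the two elements to be swapped coincide — this cannot be right since $\tau_{\iota(\sigma)}$ must be nontrivial. The resolution: I misidentified. In fact $\iota(\sigma)(k_\sigma) = j_\sigma$ means the element $\sigma(k_\sigma)$ has been moved, and $\tau_{\iota(\sigma)}$ swaps $j_{\iota(\sigma)}$ with $\iota(\sigma)(k_\sigma)$; but $j_{\iota(\sigma)}$ should be recomputed as the unique element of $\iota(\sigma)([k_\sigma - 1]) \setminus [k_\sigma-2]$, which equals $j_\sigma$, and $\iota(\sigma)(k_\sigma) = j_\sigma$ — hmm. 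The correct statement must instead be that $\tau_{\iota(\sigma)} = \tau_\sigma$ (they swap the same pair $\{j_\sigma, \sigma(k_\sigma)\}$, once one correctly tracks that $\iota(\sigma)^{-1}(\sigma(k_\sigma))$ plays the symmetric role), so that $\iota(\iota(\sigma)) = \tau_{\iota(\sigma)}\iota(\sigma) = \tau_\sigma \tau_\sigma \sigma = \sigma$.

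So the concrete steps are: (1) prove $\iota(\sigma)$ agrees with $\sigma$ on $[k_\sigma-1]$ and on $[k_\sigma]$ as a set, using $j_\sigma \notin [k_\sigma-1]$ and $j_\sigma \in \sigma([k_\sigma-1])$; (2) conclude $\iota(\sigma)$ is bad with $k_{\iota(\sigma)} = k_\sigma$ and $j_{\iota(\sigma)} = j_\sigma$; (3) verify carefully — tracking signs via the condition $\tau(-i) = -\tau(i)$ in $\signp$ — that $\tau_{\iota(\sigma)}$ is the same reflection as $\tau_\sigma$, i.e. the transposition exchanging $\pm j_\sigma$ with $\pm\sigma(k_\sigma)$; (4) deduce $\iota(\iota(\sigma)) = \tau_\sigma^2 \sigma = \sigma$. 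The main obstacle I expect is step (3): one must unwind the definition of $j_{\iota(\sigma)}$ and $\iota(\sigma)(k_{\iota(\sigma)})$ precisely, being careful that $j_\sigma$ and $\sigma(k_\sigma)$ are genuinely distinct (this is noted right before the lemma, from $j_\sigma \in \sigma([k_\sigma-1])$) and that the signed-permutation structure is respected, so that $\tau_{\iota(\sigma)}$ turns out to swap exactly the pair $\{j_\sigma, \sigma(k_\sigma)\}$ rather than some other pair; once that is pinned down, $\tau_{\iota(\sigma)} = \tau_\sigma$ and the involution property is immediate.
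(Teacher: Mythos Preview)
Your overall approach matches the paper's, but step~(1) contains a genuine error that is precisely the source of the contradiction you run into. You claim that $\iota(\sigma)$ agrees with $\sigma$ pointwise on $[k_\sigma-1]$, justified by $j_\sigma \notin [k_\sigma-1]$. This confuses domain and codomain: for $\iota(\sigma)(i) = \tau_\sigma(\sigma(i))$ to equal $\sigma(i)$ you need $\sigma(i) \notin \{\pm j_\sigma, \pm\sigma(k_\sigma)\}$, not $i \notin \{\pm j_\sigma, \pm\sigma(k_\sigma)\}$. And $j_\sigma \in \sigma([k_\sigma-1])$ \emph{by definition} of $j_\sigma$, so at $i_0 = \sigma^{-1}(j_\sigma) \in [k_\sigma-1]$ one has $\iota(\sigma)(i_0) = \sigma(k_\sigma) \neq j_\sigma = \sigma(i_0)$. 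Hence $\iota(\sigma)([k_\sigma-1]) = [k_\sigma-2] \cup \{\sigma(k_\sigma)\}$, not $\sigma([k_\sigma-1])$.

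Consequently $j_{\iota(\sigma)} = \sigma(k_\sigma)$, not $j_\sigma$. Since your computation $\iota(\sigma)(k_\sigma) = j_\sigma$ is correct, the unordered pair defining $\tau_{\iota(\sigma)}$ is $\{j_{\iota(\sigma)},\,\iota(\sigma)(k_{\iota(\sigma)})\} = \{\sigma(k_\sigma), j_\sigma\}$, which coincides with the pair for $\tau_\sigma$; thus $\tau_{\iota(\sigma)} = \tau_\sigma$ holds, with the two swapped elements having simply exchanged roles. With this correction the rest goes through: $\iota(\sigma)([k_\sigma]) = \sigma([k_\sigma])$ (as you note), and $[i-1]\subset \iota(\sigma)([i])$ for $i<k_\sigma$ because $\tau_\sigma$ fixes $[k_\sigma-2]$ pointwise, so $k_{\iota(\sigma)} = k_\sigma$ and $\iota^2(\sigma) = \tau_\sigma^2\sigma = \sigma$. (The paper's proof contains the same slip in asserting $j_{\iota(\sigma)} = j_\sigma$; the conclusion $\tau_{\iota(\sigma)} = \tau_\sigma$ is nonetheless correct for the reason just given.)
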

\begin{proof}
We have $\iota(\sigma)([k_\sigma]) = \left(\sigma([k_\sigma])\setminus \sigma(k_\sigma)\right) \cup \ls j_\sigma \rs$.
As $j_\sigma$ is not contained in $[k_\sigma-1]$ (see \cref{eq_j_sigma}) and $[k_\sigma-1]\not \subset \sigma([k_\sigma])$, this implies that also $[k_\sigma-1]\not \subset\iota(\sigma)([k_\sigma])$. Hence, $\iota(\sigma)$ is bad and $\iota$ is indeed a map
\begin{equation*}
	\iota\colon \{\sigma\in \signp\colon \sigma \text{ is bad} \}\to \{\sigma\in \signp\colon \sigma \text{ is bad} \}.
\end{equation*}

On the other hand, $\iota(\sigma)$ and $\sigma$ agree on the elements of $[\pm n]$ that are sent to $[k_\sigma-2]$, i.e.~
\begin{equation*}
	P\coloneqq \iota(\sigma)^{-1}([k_\sigma-2]) = \sigma^{-1}([k_\sigma-2]) \text{ and } \iota(\sigma)|_{P} = \sigma|_{P}.
\end{equation*}
So in particular, $[i-1]\subset \iota(\sigma)([i])$ for all $i<k_\sigma$ and 
\begin{equation*}
        \iota(\sigma)([k_\sigma-1])\setminus  [k_\sigma-2] = \{\sigma(k_{\sigma})\}\neq \{k_{\sigma}-1\}.
\end{equation*}
In other words, $k_{\sigma} = k_{\iota(\sigma)}$ and  $j_{\iota(\sigma)}=\sigma(k_{\sigma})$.
This implies that $\iota$ is an involution.
\end{proof}

\begin{lemma}\label{im of bad}
    We have $\psi_{*}([\apartment_{\mathbf{L}}]^{\on{bad}})= 0$.
\end{lemma}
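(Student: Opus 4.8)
The plan is to use the involution $\iota$ to cancel the terms of $[\apartment_{\mathbf{I}}]^{\on{bad}}$ in pairs. For each bad $\sigma \in \signp$, I will prove two things: (a) $(-1)^{\len(\iota(\sigma))} = -(-1)^{\len(\sigma)}$, and (b) $\psi(\mathbf{I}_\sigma) = \psi(\mathbf{I}_{\iota(\sigma)})$. The involution $\iota$ is fixed-point-free, since $\tau_\sigma$ swaps the two \emph{distinct} indices $j_\sigma$ and $\sigma(k_\sigma)$, so $\tau_\sigma \neq \on{id}$ and hence $\iota(\sigma) = \tau_\sigma\sigma \neq \sigma$. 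Grouping the bad elements into $\iota$-orbits $\{\sigma,\iota(\sigma)\}$ and using (a) and (b), each orbit contributes $\bigl((-1)^{\len(\sigma)} + (-1)^{\len(\iota(\sigma))}\bigr)\psi(\mathbf{I}_\sigma) = 0$ to $\psi_*([\apartment_{\mathbf{I}}]^{\on{bad}})$, so the latter vanishes; together with \cref{im of good} this completes the proof of \cref{prop_homology_surjectivity}. Claim (a) is immediate: $\iota(\sigma) = \tau_\sigma\sigma$ with $\tau_\sigma$ a reflection, so $(-1)^{\len(\tau_\sigma)} = -1$ by \cref{eq_sign_tau_sigma}, and since $w \mapsto (-1)^{\len(w)}$ is a homomorphism on the Coxeter group $\signp$ we obtain $(-1)^{\len(\iota(\sigma))} = (-1)^{\len(\tau_\sigma)}(-1)^{\len(\sigma)} = -(-1)^{\len(\sigma)}$.

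For claim (b), recall $\psi(U) = (\rk(U),[U])$ and that $\rk(I_{\rho([i])}) = i$ for every $\rho \in \signp$, so both $\psi(\mathbf{I}_\sigma)$ and $\psi(\mathbf{I}_{\iota(\sigma)})$ are non-degenerate $(n-1)$-simplices of $X_n(\class(R))$ with vertices listed in order of increasing rank; it therefore suffices to show $[I_{\sigma([i])}] = [I_{\iota(\sigma)([i])}]$ for each $1 \leq i \leq n$. When $i \geq k_\sigma$, both $\sigma^{-1}(j_\sigma) \in [k_\sigma-1] \subseteq [i]$ and $k_\sigma \in [i]$, and $\tau_\sigma$ fixes $\sigma([i])$ setwise: it only interchanges $j_\sigma$ and $\sigma(k_\sigma)$, which both lie in $\sigma([i])$, whereas $-j_\sigma, -\sigma(k_\sigma) \notin \sigma([i])$ since these would require a negative preimage inside $[n]$. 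Hence $\iota(\sigma)([i]) = \sigma([i])$. The same holds when $i < k_\sigma$ and $j_\sigma \notin \sigma([i])$, because then $\iota(\sigma)$ and $\sigma$ agree on $[i]$. In both subcases $I_{\sigma([i])} = I_{\iota(\sigma)([i])}$.

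The remaining, decisive case is $i < k_\sigma$ with $j_\sigma \in \sigma([i])$. By minimality of $k_\sigma$ we have $[i-1] \subset \sigma([i])$, and since $j_\sigma \notin [k_\sigma-1] \supseteq [i]$ (\cref{eq_j_sigma}) this forces $\sigma([i]) = [i-1] \cup \{j_\sigma\}$, whence $\iota(\sigma)([i]) = [i-1] \cup \{\sigma(k_\sigma)\}$. The key point is that both $j_\sigma$ and $\sigma(k_\sigma)$ lie in the index set $\{-i, \pm(i+1), \ldots, \pm n\}$ appearing in \cref{sat of more frames}: neither lies in $\{1,\ldots,i\}$ (for $j_\sigma$ this is $j_\sigma \notin [k_\sigma-1] \supseteq [i]$; for $\sigma(k_\sigma)$, note $\sigma(k_\sigma) \notin \sigma([k_\sigma-1])$ and $\sigma(k_\sigma) \neq k_\sigma-1$ by the choice of $k_\sigma$, so $\sigma(k_\sigma) \notin [k_\sigma-1] \supseteq [i]$), and neither equals $-m$ for any $m \in [i-1]$, for otherwise $\sigma([i])$, resp.\ $\iota(\sigma)([i])$, would contain the antipodal pair $\{m,-m\}$, which never occurs in the image of $[i] \subseteq [n]$ under a signed permutation. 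Applying \cref{sat of more frames} with $i-1$ in place of $i$ then yields $I_{\sigma([i])} = \sat{B_{i-1} + I_{j_\sigma}} = A^{(j_\sigma)}_i$ and $I_{\iota(\sigma)([i])} = \sat{B_{i-1} + I_{\sigma(k_\sigma)}} = A^{(\sigma(k_\sigma))}_i$, so $[I_{\sigma([i])}] = a_i = [I_{\iota(\sigma)([i])}]$, which proves (b). I expect the main obstacle to be exactly this last case: pinning down the range of the two interchanged indices, where the ``no antipodal pair'' constraint and the minimality of $k_\sigma$ both enter.
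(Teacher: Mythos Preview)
Your proof is correct and follows the same strategy as the paper: pair each bad $\sigma$ with $\iota(\sigma)$, use that $\tau_\sigma$ is a reflection for the sign flip, and in the middle range identify both saturations via \cref{sat of more frames} as modules with class $a_i$. Your verification of the index range needed for \cref{sat of more frames} is actually more explicit than the paper's; the only wrinkle is that your antipodal-pair argument for $\sigma(k_\sigma)\neq -m$ tacitly assumes $\iota(\sigma)([i])=[i-1]\cup\{\sigma(k_\sigma)\}$, which is what you are establishing---it is cleaner to note that $m\in[i-1]\subset\sigma([n])$ and $\sigma(k_\sigma)\in\sigma([n])$, and $\sigma([n])$ already forbids antipodal pairs.
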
    
\begin{proof}
    Let $\sigma\in\signp$ be a bad permutation. Then $\iota(\sigma) = \tau_\sigma \sigma$, where $(-1)^{\len(\tau_\sigma)}= -1$ as observed in \cref{eq_sign_tau_sigma}. Hence
\begin{equation*}
	(-1)^{\len(\iota\sigma)}=(-1)(-1)^{\len(\sigma)}
\end{equation*}
    and it suffices to show that the chambers $\psi(\mathbf{L}_{\sigma})$ and $\psi(\mathbf{L}_{\iota\sigma})$ coincide, i.e.~that $[L_{\sigma([j])}]=[L_{\iota(\sigma)([j])}]$ for all $j=1,\ldots, n$. We consider three cases:

Firstly, for $1\leq j< \sigma^{-1}(j_{\sigma})$, we have $\iota(\sigma)(j)=\sigma(j)$, so $[L_{\sigma([j])}]=[L_{\iota(\sigma)([j])}]$.
Secondly, for $\sigma^{-1}(j_{\sigma}) \leq j< k_\sigma $, we have $\sigma([j])=[j-1]\cup \ls j_\sigma \rs$ and $\iota(\sigma)([j])=[j-1]\cup \ls \sigma(k_\sigma) \rs$. Hence,
\begin{equation*}
	L_{\sigma([j])}=L_1 \oplus\ldots \oplus L_{j-1} \oplus L_{j_{\sigma}} \text{ and } L_{\iota(\sigma)([j])}=L_1 \oplus\ldots \oplus L_{j-1} \oplus L_{\sigma(k_\sigma)}.
\end{equation*}
By \cref{sat of some frames}, these are equal to $A^{(j_{\sigma})}_{j}$ and $A^{(\sigma(k_\sigma))}_{j}$, respectively. Hence 
\begin{equation*}
	[L_{\sigma([j])}] = [A^{(j_{\sigma})}_{j}] = a_j = [A^{(\sigma(k_\sigma))}_{j}] = [L_{\iota(\sigma)([j])}].
\end{equation*}
Thirdly, for $k_\sigma\leq j \leq n$, we have $\sigma([j]) = \iota(\sigma)([j])$, so also $[L_{\sigma([j])}]=[L_{\iota(\sigma)([j])}]$.
\end{proof}

\section{Proofs of the main theorems}
\label{sec_proofs_main_results}
In this section, we prove the main theorems stated in the introduction. Using \cref{prop_homology_surjectivity}, the proof of \cref{thm_non_trivial_cohomology} is almost identical to \cite[Proof of Theorem D']{Church2019}.

\begin{proof}[Proof of \cref{thm_steinberg_coinvariants}]
The map $\psi: \buildingssymp_n(K) \to X_n(\class(R))$ defined in \cref{sec_defn_X_n} is invariant under the action of $\Sp{2n}{R}$. 
Hence, the induced map 
\begin{equation*}
	\psi_{*}\colon \Stsymp = \tilde{H}_{n-1}(\buildingssymp(\Kn))\to \tilde{H}_{n-1}(X_{n}(\hcl))\cong \mbZ^{(|\class(R)|-1)^n}
\end{equation*}
factors through the coinvariants, giving a map
\begin{equation*}
\phi: H_0(\Sp{2n}{R}; \Stsymp) \cong \Stsymp_{\Sp{2n}{R}} \to \mbZ^{(|\class(R)|-1)^n}.
\end{equation*}
By \cref{prop_homology_surjectivity}, $\psi_{*}$ and hence $\phi$ is surjective.
\end{proof}

\begin{corollary}
\label{cor_Q_coinvariants}
Let $R$ be a Dedekind domain. Then $\dim H_0(\Sp{2n}{R}; \Stsymp\otimes_{\mbZ} \mbQ)\geq (|\class(R)|-1)^n$.
\end{corollary}
\begin{proof}
This immediately follows from \cref{thm_steinberg_coinvariants} because
\begin{equation*}
	H_0(\Sp{2n}{R}; \Stsymp\otimes_{\mbZ} \mbQ) \cong (\Stsymp\otimes_{\mbZ} \mbQ)_{\Sp{2n}{R}} \cong \Stsymp_{\Sp{2n}{R}}\otimes_{\mbZ} \mbQ .\qedhere
\end{equation*}
\end{proof}

Both the cohomology vanishing of \cref{thm_non_trivial_cohomology} and the non-integrality of \cref{thm_non_integrality} quickly follow from this corollary:

\begin{proof}[Proof of \cref{thm_non_trivial_cohomology}]
Borel--Serre Duality (\cref{thm_Borel_Serre}) implies that if $K$ is a number field with ring of integers $R = \mcO_K$, then 
\begin{equation*}
H^{\nu_{n}}(\Sp{2n}{R}; \mbQ) \cong H_0(\Sp{2n}{R}; \Stsymp\otimes_{\mbZ} \mbQ).
\end{equation*}
Hence, the result follows from \cref{cor_Q_coinvariants}.
\end{proof}

\begin{proof}[Proof of \cref{thm_non_integrality}]
This follows combining \cref{thm_steinberg_coinvariants} with the results of \cite{Brueck2022c}.
As pointed out in \cite[Remark 3.5]{Brueck2022c}\footnote{\cite{Brueck2022c} works in a more general setup. To specialise to the case here, choose $\mathcal{G} = \on{Sp}_{2n}$. Then the Steinberg module denoted $\texttt{St}$ in \cite{Brueck2022c} becomes $\Stsymp$. The ring $\mathfrak{O}$ in \cite{Brueck2022c} is what we call $R$ here and the ring $R$ in \cite[Remark 3.5]{Brueck2022c} here is $\mbQ$.}, the proof of \cite[Theorem 3.4]{Brueck2022c} shows that if $\Stsymp$ is generated by integral apartment classes, then $H_0(\Sp{2n}{R}; \Stsymp\otimes_\mbZ \mbQ)$ is trivial. By \cref{cor_Q_coinvariants}, this is not possible if $|\class(R)| > 1 $.

Note that unfortunately, \cite[Remark 3.5]{Brueck2022c} is only stated for the case where $R$ is a number ring. But it holds more generally for every Dedekind domain. To verify that this is the case, one easily checks that \cite[Proposition 3.3]{Brueck2022c} holds in that generality because \cite[Theorem 2.5]{Brueck2022c} and \cite[Theorem 2.7]{Brueck2022c} do.
\end{proof}

\section{Open questions}
\label{sec_open_questions}
The present article and the work of Church--Farb--Putman \cite{Church2019} provide non-trivial cohomology in the virtual cohomological dimension for the groups $\Sp{2n}{R}$ and $\SL{n}{R}$ if $R$ is not a PID.
It seems natural to ask to which other types of groups these results could be extended.

A possible class of such groups are arithmetic Chevalley groups.
Let $\chevalley$ be a Chevalley--Demazure group scheme of rank $n$ and $K$ a number field with ring of integers $R = \mcO_K$.
One can associate to $\chevalley(K)$ a spherical building $\Delta(\chevalley(K))$ and Borel--Serre duality shows that the $(n-1)$-st reduced homology of this building is a dualizing module for $\chevalley(R)$.
In the present article, we consider the case $\chevalley = \on{Sp}_{2n}$, where $\building(\Sp{2n}{K}) = \buildingssymp(K^{2n})$ and the dualizing module is the symplectic Steinberg module $\Stsymp_n(K)$. The article \cite{Church2019} studies the case $\chevalley = \on{SL}_{n+1}$, where $\building(\SL{n+1}{K})$ is (the order complex of) the poset of proper subspaces of $K^{n+1}$ and the dualizing module is the Steinberg module $\St_{n+1}(K)$.
Obtaining a non-vanishing result for non-PIDs $R$ and further Chevalley groups would be especially interesting because Br\"uck--Santos Rego--Sroka \cite{Brueck2022c} showed that if $R$ is Euclidean, then for almost all types $\chevalley$, the cohomology of $\chevalley(R)$ vanishes in the virtual cohomological dimension.

If one wanted to apply the same strategy as in \cite{Church2019} and the present article, an important step would be to define a map connecting $\building(\chevalley(K))$ and $X_{n}(\class(R))$, similar to the map $\psi$ defined in \cref{sec_defn_X_n}.
This means finding a reasonable, $\chevalley(R)$-invariant way to associate to parabolic subgroups of $\chevalley(K)$ elements in $\class(R)$.
For this, it might be helpful to observe that $X_{n}(\class(R))$ is itself the quotient of a building by an arithmetic Chevalley group of rank $n$:
\begin{equation*}
	X_{n}(\class(R)) \cong \Delta(\productSL(K))/\productSL(R), \text{ where } \productSL = \underbrace{\on{SL}_{2}\times \cdots \times \on{SL}_{2}}_{n \text{ times}}.
\end{equation*}
It might also be that instead of $X_n(\class(R))$, one would need to consider another finite complex $X(\chevalley(R))$.
In any case, one would probably need to replace the signed permutation group $\signp$ with the Weyl group of $\chevalley$. This step seems less critical to us; note that the definition of ``good'' elements in $\signp$ (\cref{def_good_bad_elements}) is formulated for general Coxeter groups.

The map $\psi\colon \buildingssymp(\Kn) \to X_{n}(\class(R))$ induces $\bar{\psi}\colon \buildingssymp(\Kn)/\Sp{2n}{R} \to X_{n}(\class(R))$.
Using \cref{chain of istropics}, this map is a surjection of cell complexes.
Church--Farb--Putman \cite[Proposition 5.4]{Church2019} showed that analogous map in the setting of $\SL{n}{R}$ is an isomorphism. (They state it for $\GL{n}{R}$, but the quotients $\buildingssln_n(R)/\GL{n}{R}$ and $\buildingssln_n(R)/\SL{n}{R}$ agree.)
We wonder whether this is also the case for $\psi$ or in a more general setting.
So in view of the previous paragraph, the question would be: Do we always get an isomorphism $\Delta(\chevalley(K))/\chevalley(R) \cong \Delta(\productSL(K))/\productSL(R)$? If not, is there some other concrete description of $\Delta(\chevalley(K))/\chevalley(R)$ or at least a lower bound on its number of cells?

It could also be interesting to study the case where $R$ is a PID that is not Euclidean. For $\chevalley = \on{SL}_n$, this was done by Miller--Patzt--Wilson--Yasaki \cite{MPWY:Nonintegralitysome}. But already for $\chevalley = \on{Sp}_{2n}$, we are not aware of results in this direction.

The above questions are all about using similarities between $\on{SL}_n$ and $\on{Sp}_{2n}$ to obtain similar lower bounds for the size of their top-dimensional rational cohomology. Conversely, one could also ask whether using differences between them, i.e.~properties that are more specific to the symplectic group, could yield stronger bounds than the ones we obtained.

\bibliographystyle{halpha}
\bibliography{bibliography}
\end{document}